\newcommand{\executeiffilenewer}[3]{%
\ifnum\pdfstrcmp{\pdffilemoddate{#1}}%
{\pdffilemoddate{#2}}>0%
{\immediate\write18{#3}}\fi%
}
\newcommand{%
\executeiffilenewer{.svg}{.pdf}%
{inkscape -z -D --file=.svg %
--export-pdf=.pdf --export-latex}%
\input{.pdf_tex}%
}[1]{%
\executeiffilenewer{#1.svg}{#1.pdf}%
{inkscape -z -D --file=#1.svg %
--export-pdf=#1.pdf --export-latex}%
\input{#1.pdf_tex}%
}
\theoremstyle{plain}
\newtheorem{prop}{Property}[section]
\newtheorem{theorem}[prop]{Theorem}
\newtheorem{proposition}[prop]{Proposition}
\newtheorem{lemma}[prop]{Lemma}
\newtheorem{definition}{Definition}[section]
\newtheorem{example}{Example}[section]
\theoremstyle{remark}
\newcommand{\N}{{\mathbb N}}
\newcommand{\F}{{\mathbb F}}
\newcommand{\x}{{\mathbf x}}
\newcommand{\y}{{\mathbf y}}
\newcommand{\z}{{\mathbf z}}
\newcommand{\e}{{\mathbf e}}
\newcommand{\s}[1]{{\overline{#1}}}
\title{On a problem by Shapozenko on Johnson graphs}
 \author{V\'{i}ctor Diego}
 \address{ Department of Mathematics, Universitat Polit\`ecnica de Catalunya, Barcelona}
 \email{victor.diego@upc.edu}
 \thanks{V. Diego acknowledges support from {\em Ministerio de Ciencia e Innovaci\'on}, Spain, and the {\em European Regional Development Fund} under a FPI grant in the project MTM2011-28800-C02-01}
 \author{Oriol Serra}
  \address{ Department of Mathematics, Universitat Polit\`ecnica de Catalunya and Barcelona Graduate School of Mathematics}
  \email{oriol.serra@upc.edu}
  \thanks{O.\,Serra was supported by the Spanish Ministerio de Econom\'ia y Competitividad under project MTM2014-54745-P. }
  \author{Llu\'{i}s Vena}
  \address{Computer Science Institute of Charles University (IUUK and ITI), Prague}
  \email{lluis.vena@gmail.com}
\thanks{L.\,Vena  was supported by the Center of Excellence-Inst. for Theor. Comp. Sci., Prague, P202/12/G061, and by Project ERCCZ LL1201 \emph{CORES}}
\date{}
\begin{document}
\baselineskip 14pt

\begin{abstract}
	The Johnson graph $J(n,m)$ has the $m$--subsets of $\{1,2,\ldots,n\}$ as vertices and two
	subsets are adjacent in the graph if they share $m-1$ elements. Shapozenko asked
	about the isoperimetric function $\mu_{n,m}(k)$ of Johnson graphs, that is, the cardinality of the smallest boundary of sets with $k$ vertices in $J(n,m)$ for each $1\le k\le {n\choose m}$. We give an upper bound for $\mu_{n,m}(k)$ and show that, for each given $k$ such that the solution to the Shadow Minimization Problem in the Boolean lattice is unique, and each  sufficiently large $n$, the given upper bound is  tight. We also show that the bound is tight for the small values of $k\le m+1$ and for all values of $k$ when $m=2$.
	
	{\bf Keywords:} Johnson graph, Isoperimetric problem, Shift compression.
\end{abstract}

\maketitle


\section{Introduction}
\label{sec:intr}

Let  $G=(V,E)$ be a graph. Given a set $X\subset V$ of vertices, we denote by
$$
\partial X=\{ y\in V\setminus X: d(X,y)=1\},\; B(X)=\{ y\in V: d(X,y)\le 1\} =X\cup \partial X,
$$
the {\it boundary} and the {\it ball} of $X$ respectively, where $d(X,y)$ denotes $\min\{d(x,y):x\in X\}$.

We write $\partial_G$ and $B_G$ when the reference to $G$ has to be made explicit. The {\it vertex-isoperimetric function} (we will call it simply {\it isoperimetric function}) of $G$ is defined as
$$
\mu_G (k)=\min \{|\partial X|: X\subset V,\; |X|=k\},
$$
that is, $\mu_G (k)$ is the size of the smallest boundary among sets of vertices with cardinality  $k$.

The isoperimetric function is known only for a few classes of graphs. One of the seminal results is the exact determination of the isoperimetric function for the $n$--cube obtained by Harper \cite{harper66} in 1966 (and by Hart with the edge--isoperimetric function at \cite{Hart76} in 1976.) Analogous results were obtained for cartesian products of chains by Bollob\'as and Leader \cite{BL90} and Bezrukov \cite{bezrukov90}, cartesian products of even cycles by Karachanjan \cite{Karachanjan82} and Riordan \cite{Riordan98} (see also Bezrukov and Leck at \cite{bezrukovleck09}) and some other cartesian products by Bezrukov and Serra \cite{bezrukovserra02}. 

The Johnson graph $J(n,m)$ has  the $m$--subsets of $[n]=\{1,2,\ldots,n\}$ as  vertices and two $m$--subsets are adjacent in the graph whenever their symmetric difference has cardinality $2$.  It follows from the definition that, for $m=1$, the Johnson graph $J(n,1)$ is the complete graph $K_n$. For $m=2$ the Johnson graph $J(n,2)$ is the line graph of the complete graph on $n$ vertices, also known as the triangular graph $T(n)$. Thus, for instance, $J(5,2)$ is  the complement of the Petersen graph, displayed in Figure \ref{fig:j52}. Also, $J(n,2)$ is the complement of the Kneser graph $K(n,2)$, the graph which has the $2$--subsets of $[n]$ as vertices and two pairs are adjacent whenever they are disjoint.

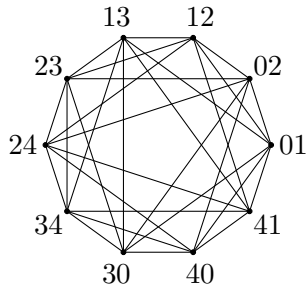
\begin{figure}[h]
	\begin{center}
		\begin{tikzpicture}[scale=0.3]

		\foreach \i in {0,...,9}
		{
			\path (36*\i:5) coordinate (P\i);
			\path (36*\i:6) coordinate (Q\i);
			\draw[fill] (P\i) circle (3pt);
		}
		
		\node  at (Q0) {$01$};
		\node  at (Q1) {$02$};
		\node  at (Q2) {$12$};
		\node  at (Q3) {$13$};
		\node  at (Q4) {$23$};
		\node  at (Q5) {$24$};
		\node  at (Q6) {$34$};
		\node  at (Q7) {$30$};
		\node  at (Q8) {$40$};
		\node  at (Q9) {$41$};
		
		\foreach \i / \j  in {0/1,1/2,2/3,3/4,4/5,5/6,6/7,7/8,8/9,9/0}
		{
			\draw (P\i) --(P\j);
		}
		
		\foreach \i / \j  in {0/2,2/4,4/6,6/8,8/0,0/3,3/6,6/9,9/2,2/5,5/8,8/1,1/4,4/7,7/0}
		{
			\draw (P\i) --(P\j);
		}
		
		\foreach \i / \j  in {1/5,1/7,3/7,3/9, 5/9}
		{
			\draw (P\i) --(P\j);
		}
		
		\end{tikzpicture}
		\caption{The Johnson graph $J(5,2)$.}
		
	\end{center}
\end{figure}\label{fig:j52}

Johnson  graphs arise from the association schemes named after Johnson who introduced them, see e.g. \cite{Cam99}.The Johnson graphs are one of the important classes of distance--transitive graphs; see e.g.  Brouwer, Cohen, Neumaier \cite[Chapter 9]{browercohen1989} or  Godsil  \cite[Chapter 11]{godsil1993}.

Given a family $S$ of $m$--sets of an $n$--set, its \textit{lower shadow} $\Delta (S)$ is the family of $(m-1)$--sets which are contained in some $m$--set in $S$. The \textit{upper shadow} $\nabla (S)$ of $S$ is the family of $(m+1)$-sets which contain some $m$--set in $S$. The \textit{ball} of $S$ in the Johnson graph $J(n,m)$ can be written as
\begin{equation}
\label{deltas}
B(S)=\nabla(\Delta (S))=\Delta (\nabla (S)).
\end{equation}

These equalities establish a connection between the isoperimetric problem in the Johnson graph with the Shadow Minimization Problem (SMP) in the Boolean lattice, which consists in finding, for a given $k$, the smallest cardinality of $\Delta (S)$ among all families $S$ of $m$--sets with cardinality $k$. The latter problem is solved by the well--known Kruskal--Katona theorem \cite{Kruskal63,Katona68}, which establishes that the initial segments in the colex order provide a family of extremal sets for the SMP.

Recall that the {\it colex order} in the set of $m$--subsets of $[n]$ is defined as $X\le Y$ if and only if $\max ((X\setminus Y)\cup(Y\setminus X))\in Y$ (we follow here the terminology from Bollob\'as \cite[Section 5]{bollobas}; we also use ${[n]\choose m}$ to denote the family of $m$--subsets of an $n$--set, and $[k,l]=\{k,k+1,\ldots ,l\}$ for integers $k<l$.) The computation of the boundary of initial segments in the colex order (the family of the first $m$-- subsets in this order) provides the following upper bound for the isoperimetric function of Johnson graphs:

\begin{proposition}\label{prop:upper} Let $\mu_{n,m}: [N]\to \N$ denote the isoperimetric function of the Johnson graph $J(n,m)$, where $N= {n\choose m}$. Let
	$$
	k={k_0\choose m}+{k_1\choose m-1}+\cdots +{k_r\choose m-r},\;  k_0> \cdots > k_r\ge m-r>0,
	$$
	be the $m$--binomial representation of $k$. Then
	\begin{equation}\label{eq:ub}
	\mu_{n,m}(k)\le f(k,n,m),
	\end{equation}
	where
	\begin{equation}\label{eq:ub1}
	f(k,n,m)=
	\binom{k_0}{m-1}(n-k_0)+\sum_{i=1}^{r}\left(\binom{k_i}{m-i-1}(n-k_0-1)-\binom{k_i}{m-i}\right).
	\end{equation}
\end{proposition}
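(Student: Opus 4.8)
The plan is to produce an explicit set of $k$ vertices whose boundary has size exactly $f(k,n,m)$, namely the initial segment $\mathcal{C}_k\subseteq\binom{[n]}{m}$ of the colex order, and then to compute $|\partial\mathcal{C}_k|$; since $\mu_{n,m}(k)\le|\partial\mathcal{C}_k|$ by definition, this proves \eqref{eq:ub} (with equality). The starting point is the cascade description of colex initial segments read off from the $m$-binomial representation of $k$: one checks directly that $\mathcal{C}_k=B_0\sqcup B_1\sqcup\cdots\sqcup B_r$, where $B_0=\binom{[k_0]}{m}$ and, for $1\le i\le r$,
\[
B_i=\Big\{\,P_i\cup A:\ A\in\binom{[k_i]}{m-i}\,\Big\},\qquad P_i=\{\,k_0+1,\,k_1+1,\,\dots,\,k_{i-1}+1\,\}.
\]
Thus $|P_i|=i$, $|B_i|=\binom{k_i}{m-i}$, every element of such an $A$ is smaller than every element of $P_i$ (because $k_{i-1}<k_0$), and every member of $\mathcal{C}_k$ has largest element at most $k_0+1$.

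Next I would partition the boundary according to the first block a vertex ``sees''. For $0\le i\le r$ let $D_i$ be the set of $Y\in\binom{[n]}{m}\setminus\mathcal{C}_k$ adjacent in $J(n,m)$ to $B_i$ but to none of $B_0,\dots,B_{i-1}$; then $\partial\mathcal{C}_k=D_0\sqcup\cdots\sqcup D_r$ by construction. The core of the proof is the explicit identification of the $D_i$. One first checks that a vertex $Y\notin\mathcal{C}_k$ is adjacent to $B_0$ if and only if $|Y\cap[k_0]|=m-1$, equivalently $Y=A\cup\{x\}$ with $A\in\binom{[k_0]}{m-1}$ and $x>k_0$; hence
\[
D_0=\Big\{\,A\cup\{x\}\ :\ A\in\binom{[k_0]}{m-1},\ x>k_0\,\Big\}\setminus\mathcal{C}_k,
\]
\[
D_i=\Big\{\,P_i\cup A'\cup\{y\}\ :\ A'\in\binom{[k_i]}{m-i-1},\ y\ge k_0+2\,\Big\}\qquad(1\le i\le r).
\]
Granting these, the count is immediate. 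There are $\binom{k_0}{m-1}(n-k_0)$ sets $A\cup\{x\}$, and those that lie in $\mathcal{C}_k$ are precisely the members of $\mathcal{C}_k$ with largest element equal to $k_0+1$, namely $B_1\cup\cdots\cup B_r$, which number $\sum_{i=1}^r\binom{k_i}{m-i}=k-\binom{k_0}{m}$; so $|D_0|=\binom{k_0}{m-1}(n-k_0)-\sum_{i=1}^r\binom{k_i}{m-i}$. For $1\le i\le r$ the sets listed in $D_i$ are pairwise distinct and number $\binom{k_i}{m-i-1}(n-k_0-1)$. Adding the $|D_i|$ gives exactly the right-hand side of \eqref{eq:ub1}.

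The step I expect to be the main obstacle is the verification of the description of $D_i$ for $i\ge1$, in particular the inclusion ``$\subseteq$''. Given $Y\in D_i$, fix a generator $X=P_i\cup A\in B_i$ with $Y=(X\setminus\{x\})\cup\{y\}$, $x\in X$, $y\notin X$, and argue by cases on whether $x$ lies in $P_i$ or in $A$ and on the position of $y$ relative to $k_0$. If $x\in P_i$, or if $x\in A$ with $y\le k_0$, one shows that $Y$ is either in $\mathcal{C}_k$, or has $|Y\cap[k_0]|\ge m-1$ and so is adjacent to $B_0$, or is adjacent to $B_t$ for some $t<i$ with witness $P_t\cup A''$, where $A''\in\binom{[k_t]}{m-t}$ is obtained by absorbing $\{k_{t+1}+1,\dots,k_{i-1}+1\}\cup A$ into a subset of $[k_t]$; each possibility contradicts $Y\in D_i$. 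The only case that survives is $x\in A$ and $y\ge k_0+2$, which yields the claimed form $P_i\cup(A\setminus\{x\})\cup\{y\}$. These case checks — together with a few boundary situations such as $k_0=n$ (where $r=0$ and $\partial\mathcal{C}_k=\emptyset=f$) and $n=k_0+1$ (where the terms indexed by $i\ge1$ vanish on both sides) — are routine but must be carried out with care; the disjointness of the $D_i$ and the fact that they exhaust $\partial\mathcal{C}_k$ hold by construction.
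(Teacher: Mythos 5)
Your proposal is correct and follows essentially the same route as the paper: both use the cascade decomposition of the colex initial segment into blocks $B_0,\dots,B_r$ and count the boundary contributed by each block, the only difference being that the paper organizes the count as an induction on $r$ (peeling off the last block, with $\partial I=(\partial I'\setminus I_r)\cup(\partial I_r\setminus B(I'))$, whose second term is exactly your $D_r$), while you partition $\partial\mathcal{C}_k$ directly by the first block each boundary vertex is adjacent to. The key identifications — that the new boundary vertices of $B_i$ ($i\ge 1$) are obtained by swapping an element of $[k_i]$ for one in $[k_0+2,n]$, and that $B_1\cup\dots\cup B_r$ must be subtracted from $\partial B_0$ — coincide with the paper's.
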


\begin{proof} The initial segment $I$ of length $k$ in the colex order is the disjoint union
	$$
	I=I_0\cup\cdots \cup I_r,
	$$
	where $I_0$ consists of all $m$--sets in ${[k_0]\choose m}$ and, for $j>0$, $I_j$ consists of all sets containing $\{k_{j-1}+1,\ldots, k_0+1\}$ and $m-j$ elements in $[k_j]$. The right hand side of \eqref{eq:ub1} is the cardinality of $\partial I$ as can be shown by induction on $r$. If $r=0$ then $\partial I$ consists of the ${k_0\choose m-1}(n-k_0)$ sets obtained by replacing one element in $[k_0]$ by one element in $[k_0+1,n]$ from a set in $I$. Suppose that $r>0$ and write $I=I'\cup I_r$ as the disjoint union of $I'=I_0\cup \cdots \cup I_{r-1}$ and $I_r$. We have $\partial I=(\partial I'\setminus I_r)\cup (\partial I_r\setminus B(I'))$, the union being disjoint. Since $I_r\subset \partial I'$ we have,
	$$
	|\partial I'\setminus I_r|=|\partial I'|-|I_r|=|\partial I'|-{k_r\choose m-r},
	$$
	while
	$$
	|\partial I_r\setminus B(I')|={k_r\choose m-r-1}(n-k_0-1),
	$$
	since the only sets in $\partial I_r\setminus B(I')$ are those obtained from a set in $I_r$ by replacing one element in $[k_r]$ by one element in $[k_0+2,n]$. 
\end{proof}

The family of initial segments in the colex order  does not provide in general a solution to the isoperimetric problem in $J(n,m)$. A simple example is as follows. 

\begin{example}
	\label{example1}
	Take $n=3(m+1)/2$. The ball $B(\{\x\})$ of radius one in $J(3(m+1)/2,m)$ has cardinality
	$$
	|B_1|=1+m(n-m)=\frac{(m+2)(m+1)}{2}={m+2\choose m},
	$$
	and its boundary has cardinality
	$$
	|\partial B_1|={m\choose 2}{n-m\choose 2}=\frac{m(m-1)(m+3)(m+1)}{16}.
	$$
	On the other hand, according to \eqref{eq:ub1} and the $m$--binomial decomposition of $|B_1|$, the initial segment $I$ of length $|B_1|$ has cardinality
	\begin{align*}
	|\partial I|&={m+2\choose m-1}\frac{m-1}{2}=\frac{(m+2)(m+1)m(m-1)}{12}\\&=|\partial B_1|+\frac{(m+1)m(m-1)^2}{48},
	\end{align*}
	which shows that the unit ball can have, as a function of $m$, an  arbitrarily smaller boundary than the initial segment  in the colex order.
\end{example}

In his monograph on discrete isoperimetric problems  Leader \cite{leader91}  mentions   the isoperimetric problem for Johnson graphs as one of the intriguing open problems in the area. Later on, in his extensive monograph on isoperimetric problems, Harper \cite{Harper04} atributes the problem to Shapozenko, and recalls that it is still open. Recently, Christofides, Ellis and Keevash \cite{keevash} have obtained a lower bound for the isoperimetric function of Johnson graphs  which is asymptotically tight for sets with cardinality $\frac{1}{2}{n\choose m}$. The Johnson graphs $J(n,2)$ provide a counterexample to a conjecture of Brouwer on the $2$--restricted connectivity of strongly regular graphs, see Cioab\^a, Kim and Koolen  \cite{cioaba2012} and  Cioab\^a,  Koolen  and Li\cite{cioaba2014}, where the connectivity of the more general class of strongly regular graphs and distance--regular graphs is studied. It is also worth mentioning that the edge version of the isoperimetric problem, where the minimization is for the number of edges leaving a set of given cardinality, has also been studied, see e.g.  Ahlswede and Katona \cite{ahslwedekatona78} or Bey \cite{bey2006}. We will only deal with the vertex isoperimetric problem in this paper and refer to it simply as the isoperimetric problem.

Our main purpose in this paper is to show that the initial segments in the colex order still provide a solution to the isoperimetric problem in $J(n,m)$  for many small values of $k$, thus providing the exact value of the isoperimetric function in these cases.

We call a set $S$ of vertices of $J(n,m)$ {\it optimal} if $|\partial (S)|=\mu_{n,m}(|S|)$.   Our first result shows that initial segments in the colex order are optimal sets  in $J(n,2)$.

\begin{theorem}\label{thm:m=2} For each $n\ge 3$ and each $1\le k\le {n\choose 2}$ we have 
	$$
	\mu_{n,2}(k)=f(k,n,2).
	$$
	In particular, the initial segments in the colex order are optimal sets of $J(n,2)$ for each $n\ge 3$.
\end{theorem}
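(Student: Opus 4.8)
The plan is to exploit the identification of $J(n,2)$ with the triangular graph $T(n)$, the line graph of $K_n$: a vertex is an edge $\{i,j\}$ of $K_n$, and two vertices are adjacent iff the corresponding edges share an endpoint. A set $S$ of $k$ vertices of $J(n,2)$ is thus a graph $H$ on (a subset of the) $n$ labelled points with $k$ edges, and the boundary $\partial S$ consists of all edges not in $H$ that meet some edge of $H$, i.e. of all non-edges of $H$ with at least one endpoint in $V(H)$, the set of non-isolated vertices of $H$. Writing $t=|V(H)|$, one computes
\begin{equation}\label{eq:boundarycount}
|\partial S| \;=\; \binom{t}{2} - k \;+\; t\,(n-t).
\end{equation}
So, crucially, $|\partial S|$ depends on $S$ \emph{only through} $t=|V(H)|$ (and the fixed quantities $k,n$). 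Since the right-hand side of \eqref{eq:boundarycount} is, for fixed $k$ and $n$, a strictly convex quadratic in $t$ minimized near $t\approx n/2$, minimizing $|\partial S|$ over all $k$-edge graphs $H$ is exactly the problem of determining the minimum possible number of non-isolated vertices, call it $t_{\min}(k)$, and then checking whether the ``$n/2$ side'' is ever reached; in fact for the colex initial segment one always lands on the increasing branch (small $t$), and I will verify that the quadratic \eqref{eq:boundarycount} is non-increasing in $t$ on the whole relevant range $t\le$ (value attained by colex), so that $t=t_{\min}(k)$ is optimal.

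Next I identify $t_{\min}(k)$. A graph with $t$ non-isolated vertices has at most $\binom{t}{2}$ edges, and conversely $\binom{t}{2}$ edges on $t$ vertices is realizable (take $K_t$); more generally any $k\le\binom{t}{2}$ is realizable on $t$ non-isolated vertices provided one can avoid isolated vertices, which is possible as long as $k\ge \lceil t/2\rceil$ — and in our range $k\ge t_{\min}$ this holds. Hence $t_{\min}(k)$ is the least $t$ with $\binom{t}{2}\ge k$, i.e. writing the $2$-binomial representation $k=\binom{k_0}{2}+\binom{k_1}{1}$ with $k_0>k_1\ge 0$, we get $t_{\min}(k)=k_0$ if $k_1=0$ and $t_{\min}(k)=k_0+1$ if $k_1\ge 1$. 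I would then simply substitute $t=t_{\min}(k)$ into \eqref{eq:boundarycount} and check, by a direct (routine) computation in each of the two cases, that the result equals $f(k,n,2)$ as given by \eqref{eq:ub1} with $m=2$; this matches the value for the colex initial segment already computed in Proposition~\ref{prop:upper}, which confirms both that $f(k,n,2)$ is the right formula and that the colex initial segment is an optimal set. Together with the upper bound $\mu_{n,2}(k)\le f(k,n,2)$ of Proposition~\ref{prop:upper}, this gives $\mu_{n,2}(k)=f(k,n,2)$.

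The one genuine point needing care — the ``main obstacle'', though a mild one — is the monotonicity claim: that on the range of $t$ that can occur as $|V(H)|$ for a $k$-edge graph with $t$ near-minimal, the quadratic \eqref{eq:boundarycount} is actually minimized at $t=t_{\min}(k)$ rather than at some larger $t$ closer to $n/2$. Since $g(t):=\binom{t}{2}+t(n-t)-k$ has $g(t+1)-g(t)=n-t$, the function is strictly decreasing in $t$ exactly while $t<n$, hence \emph{increasing} in $t$ only when $t>n$, which never happens; so in fact $g$ is non-increasing throughout $t\in\{t_{\min}(k),\dots,n\}$?? — wait, that would say larger $t$ is better, contradicting optimality of colex. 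The resolution, which I would spell out carefully, is that $g(t+1)-g(t)=n-t$ is \emph{positive} for $t<n$, so $g$ is strictly \emph{increasing} on $\{1,\dots,n\}$; therefore the minimum is at the smallest feasible $t$, namely $t_{\min}(k)$, which is exactly what the colex segment achieves. This sign check is the only place an error could creep in, so I would present \eqref{eq:boundarycount} and the difference $g(t+1)-g(t)=n-t>0$ explicitly and conclude that any $k$-edge subgraph has boundary at least $g(t_{\min}(k))=f(k,n,2)$, with equality for the colex initial segment.
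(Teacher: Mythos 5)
Your argument is essentially the paper's own proof: both reduce to the observation that for a $k$-set $S$ with support of size $t$, the boundary is exactly the set of pairs meeting the support that are not in $S$, so $|\partial S|=\binom{t}{2}+t(n-t)-k=\binom{n}{2}-k-\binom{n-t}{2}$ depends only on $t$ and is increasing in $t$, whence the minimum is attained at the least $t$ with $\binom{t}{2}\ge k$, which the colex initial segment realizes. The only blemish is arithmetic: the correct increment is $g(t+1)-g(t)=n-t-1$ (not $n-t$), which is still nonnegative on the relevant range, so your conclusion stands.
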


The following theorem allows one to show that  the inequality \eqref{eq:ub} is also tight in $J(n,m)$ for very small sets.

\begin{theorem}\label{thm:m+1} For $k< m-1$ and $n\ge 2(m-1)$ the initial segment of length $k$ of the colex order in $J(n,m)$ is an optimal set. 
\end{theorem}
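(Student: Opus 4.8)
The plan is to reduce, via shift compression, to the case where $S$ is a down-set in the domination order on $m$-sets, and then to isolate the dependence on $n$ by splitting the boundary of $S$ into a shadow part that scales with $n$ and a local part supported on the ground set actually used by $S$. The shift operators $C_{ij}$ ($i<j$), which replace each $A\in S$ with $j\in A$, $i\notin A$ by $(A\setminus\{j\})\cup\{i\}$ unless the latter already lies in $S$, do not increase $|\partial S|$ in $J(n,m)$; iterating them I may assume $S$ is a down-set for the order $\{a_1<\dots<a_m\}\preceq\{b_1<\dots<b_m\}\iff a_i\le b_i$ for all $i$. Such an $S$ contains $[m]$, is connected in $J(n,m)$, and, taking a linear extension $A_1,\dots,A_k$ of $\preceq$ restricted to $S$, each $A_{t+1}$ ($t+1\ge2$) has all its $\preceq$-predecessors among $A_1,\dots,A_t$ and in particular differs from one of them in a single element (a lower cover), so $|A_1\cup\dots\cup A_{t+1}|\le|A_1\cup\dots\cup A_t|+1$ and therefore $U:=\bigcup_{A\in S}A$ has $|U|\le m+k-1$. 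As $k<m-1$ and $n\ge 2(m-1)$ this gives $|U|\le 2m-3<n$, so $[n]\setminus U\neq\emptyset$ and $S\subseteq\binom{U}{m}$.

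The key identity, valid for any $S\subseteq\binom{U}{m}$, is
\[
|\partial_{J(n,m)}S|=|\Delta(S)|\,(n-|U|)+|\partial_{J(U,m)}S|,
\]
where $J(U,m)$ is the Johnson graph on ground set $U$: a vertex $z\in\partial S$ either lies in $\binom{U}{m}$, contributing to the boundary of $S$ inside $J(U,m)$, or it uses exactly one element $b\in[n]\setminus U$, in which case $z\setminus\{b\}\in\Delta(S)$; and conversely each $T\cup\{b\}$ with $T\in\Delta(S)$, $b\notin U$, lies in $\partial S\setminus S$, the two families being disjoint. For the colex initial segment $I$ of length $k$, with $2\le k<m-1$, one has $U(I)=[m+1]$, $|\Delta(I)|=\binom{m+1}{2}-\binom{m-k+1}{2}$ (the Kruskal--Katona value), and $\partial_{J([m+1],m)}I=\binom{[m+1]}{m}\setminus I$ of size $m+1-k$, so the identity reproduces $f(k,n,m)$; the case $k=1$ is immediate from vertex-transitivity.

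It remains to show $|\partial S|\ge f(k,n,m)$ for every down-set $S$ of size $k$. By Kruskal--Katona, $|\Delta(S)|\ge|\Delta(I)|$; when equality holds, the equality case of Kruskal--Katona (for $k<m-1$) forces $S\subseteq\binom{W}{m}$ for some $(m+1)$-set $W$, whence $|U|=m+1$ and $|\partial_{J(W,m)}S|=m+1-k$, so the identity gives $|\partial S|=f(k,n,m)$. When $|\Delta(S)|\ge|\Delta(I)|+1$, the bounds $|U|\le m+k-1$ and $|\partial_{J(U,m)}S|\ge0$ yield $|\partial S|\ge(|\Delta(I)|+1)(n-m-k+1)$, which exceeds $f(k,n,m)=|\Delta(I)|(n-m-1)+(m+1-k)$ once $n$ is large enough relative to $m$; this final comparison is linear in $n$ and fixes the admissible range. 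Finally, a disconnected $S$ may be assumed to have components pairwise at distance $\ge 3$, so $|\partial S|=\sum_i|\partial S_i|=\sum_i f(k_i,n,m)$ by induction on $k$, and a short computation shows $f(\cdot,n,m)$ is superadditive for arguments below $m-1$, giving $\sum_i f(k_i,n,m)\ge f(k,n,m)$.

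The main obstacle is the case $|\Delta(S)|>|\Delta(I)|$ with support strictly larger than $|U(I)|=m+1$: the extra shadow elements must offset the smaller multiplier $n-|U|$, and for small $n$ the two boundaries are genuinely comparable — for instance $S'=\{[m],\{1,\dots,m-1,m+1\},\{1,\dots,m-1,m+2\}\}$ satisfies $|\partial S'|-f(3,n,m)=n-2m$. Determining the exact threshold on $n$ below which the colex segment ceases to win, and matching it to the stated hypothesis, is the real content of the argument; it is here that both the smallness of $k$ (which keeps $|U|$ bounded) and the lower bound on $n$ are used.
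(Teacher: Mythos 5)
Your route is genuinely different from the paper's: the paper inducts on $n$ using the decomposition of $V(J(n,m))$ by the last coordinate (Lemma \ref{lem:section}), with base case $n=2m-2$ handled by Proposition \ref{prop:2m-2}, whereas you work directly with the identity $|\partial S|=|\Delta(S)|\,(n-|U|)+|\partial_{J(U,m)}S|$ for a compressed $S$ with support $U$. That identity, the bound $|U|\le m+k-1$, the verification that the colex segment attains $f(k,n,m)=|\Delta(I)|(n-m-1)+(m+1-k)$, and the treatment of the equality case $|\Delta(S)|=|\Delta(I)|$ via the uniqueness part of Kruskal--Katona (Theorem \ref{thm:fg}) are all correct. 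But the case $|\Delta(S)|\ge|\Delta(I)|+1$ is a genuine gap, as you concede: discarding $|\partial_{J(U,m)}S|$ and using only $|U|\le m+k-1$ reduces the comparison to $n-2m\ge (k-2)|\Delta(I)|$, a threshold of order $m^3$ for $k$ near $m-1$, nowhere near the stated hypothesis $n\ge 2(m-1)$.

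The deeper problem is that your own example shows the gap cannot be closed in the stated range. For $m\ge 5$ (so that $k=3<m-1$) and $S'=\{[m],\,[m-1]\cup\{m+1\},\,[m-1]\cup\{m+2\}\}$ one has, as you compute, $|\partial S'|-f(3,n,m)=n-2m$, which is negative for $n=2m-2$ and $n=2m-1$. Concretely, in $J(8,5)$ the set $\{12345,12346,12347\}$ has boundary $25$, while the colex segment $\{12345,12346,12356\}$ has boundary $f(3,8,5)=27$. So the statement as printed fails at $n=2(m-1)$, and no completion of your argument (or any other) can establish it there; the hypothesis must be at least $n\ge 2m$, and in general a bound growing with $k$. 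This also pinpoints where the paper's own proof goes wrong: its base case rests on the claim that colex initial segments of the middle level $\binom{[2m-2]}{m-1}$ minimize the upper shadow, but complementation carries colex initial segments to colex \emph{final} segments, and indeed $\Delta(S')$, though larger than $\Delta(I_3)$, has the smaller upper shadow ($28$ versus $30$ inside $\binom{[8]}{5}$). Your support-splitting identity, run with an honest threshold on $n$, is the right way to obtain a correct version of the theorem.
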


Our last result, Theorem \ref{thm:asymptotic}, extends Theorem \ref{thm:m+1} in an asymptotic way, by showing that the inequality \eqref{eq:ub} is tight for  a large number of small cardinalities and gives a lower bound for all small cardinalities.

\begin{theorem}\label{thm:asymptotic} Let $k, m$ be positive integers and let 
	$$
	k={k_0\choose m}+{k_1\choose m-1}+\cdots +{k_r\choose m-r},\;  k_0> \cdots > k_r\ge m-r>0,
	$$
	be the $m$--binomial representation of $k$.

	There is $n(k,m)$ such that, for all $n\ge n(k,m)$, the following holds.
	\begin{enumerate}
		\item[{\rm (i)}] If $r<m-1$ then 
		$$
		\mu_{n,m}(k)=f(k,n,m),
		$$
		and  the initial segment in the colex order with length $k$ is the only (up to automorphisms) optimal set with cardinality $k$ of the Johnson graph $J(n,m)$.
		\item[{\rm (ii)}] 
		If $r=m-1$ then
		$$
		\mu_{n,m}(k)\le f(k-k_r,n,m)+k_r.
		$$
	\end{enumerate}
\end{theorem}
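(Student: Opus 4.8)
The identity $B(S)=\nabla(\Delta(S))$ of \eqref{deltas} turns everything into a statement about shadows, and I would exploit the following bookkeeping of the boundary. For a family $T$ of $(m-1)$--subsets of $[n]$, each $B\in T$ lies in exactly $n-m+1$ $m$--sets and each $m$--set $C$ contains some number $j(C)\in\{1,\dots,m\}$ of members of $T$, so counting incidences between $T$ and $\nabla(T)$ gives $|\nabla(T)|=(n-m+1)|T|-\sum_{j\ge1}(j-1)a_j(T)$, where $a_j(T)$ is the number of $m$--sets $C$ with $j(C)=j$. Taking $T=\Delta(S)$,
\[
|\partial S|=(n-m+1)\,|\Delta(S)|-|S|-\textstyle\sum_{j\ge1}(j-1)a_j(\Delta(S)),
\]
and since $j-1\le\binom j2$ the correction term is at most the number of pairs of members of $\Delta(S)$ at distance $1$ in $J(n,m-1)$, hence at most $\binom{mk}{2}$ --- a bound independent of $n$, because $|\Delta(S)|\le mk$. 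So up to an $O_{k,m}(1)$ error, $|\partial S|$ equals $(n-m+1)$ times the size of $\Delta(S)$ minus $|S|$.

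\textbf{Part (i).}
Let $d=\sum_{i=0}^{r}\binom{k_i}{m-i-1}$ be the Kruskal--Katona lower bound for $|\Delta(S)|$ when $|S|=k$; the colex initial segment $I$ attains it. Feeding $I$ into the displayed identity gives $f(k,n,m)=|\partial I|\le(n-m+1)d-k$, while any $S$ with $|\Delta(S)|\ge d+1$ satisfies $|\partial S|\ge(n-m+1)(d+1)-k-\binom{mk}{2}$. Hence once $n(k,m)$ is chosen with $n-m+1>\binom{mk}{2}$, every optimal $k$--set has shadow of the minimum size $d$. To finish, apply an automorphism so that $S$ is supported on at most $mk$ points and invoke the uniqueness part of Kruskal--Katona --- which holds precisely because $r<m-1$ --- to conclude that $S$ is, up to relabelling, the initial segment $I$. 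Therefore $\mu_{n,m}(k)=f(k,n,m)$ with $I$ the unique optimal set. In the spirit of the title one can replace this last invocation by a direct argument: prove that the shift operators $C_{ij}$ do not increase $|\partial S|$, reduce to shifted families, and classify shifted $k$--element families of minimum shadow for $n$ large.

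\textbf{Part (ii).}
When $r=m-1$ the extremal family for the shadow problem is not unique, so I would prove the upper bound by exhibiting a witnessing set rather than by an extremal argument. Put $k-k_r=\binom{k_0}{m}+\cdots+\binom{k_{m-2}}{2}$; by Part (i) its colex initial segment $I'$ is optimal, with $|\partial I'|=f(k-k_r,n,m)$ and shadow of minimum size $d'$ (and $d=d'+1$). The plan is to take $S=I'\cup E$ with $E$ a set of $k_r$ further $m$--sets, all chosen inside $\partial I'$ and enlarging the shadow by as little as possible (ideally by a single new $(m-1)$--set); then $|\partial S|=f(k-k_r,n,m)-k_r+|\partial E\setminus B(I')|$, and the whole point is to place $E$ so that $|\partial E\setminus B(I')|$ is small enough to match $f(k-k_r,n,m)+k_r$. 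The verification is the combinatorial bookkeeping of which $m$--sets adjacent to $E$ already lie in $B(I')$, using that $I'$ fills up $\binom{[k_0]}{m}$ together with the adjacent level, so $E$ can sit "just outside" $I'$ in colex.

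\textbf{Main obstacle.}
The decisive point is the uniqueness step in Part (i): upgrading ``$S$ has a shadow of minimum size'' to ``$S$ is the colex initial segment'', uniformly in all large $n$. The asymptotic estimate cleanly reduces the isoperimetric problem to a bounded-support problem, but one still needs the precise structure of Kruskal--Katona extremal families in the regime $r<m-1$, an explicit threshold $n(k,m)$, and enough control on the correction term $\sum(j-1)a_j$ that a family with a strictly larger shadow can never catch up --- which is exactly what the hypothesis $n\ge n(k,m)$ buys.
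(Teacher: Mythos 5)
Your Part (i) is correct and, at its core, is the same argument as the paper's: isolate the coefficient of the $n$--linear term in $|\partial S|$, observe that this coefficient is $|\Delta(S)|$, bound everything else by a constant depending only on $k$ and $m$, and then invoke the F\"uredi--Griggs characterization (the paper's Theorem \ref{thm:fg}) of when the Kruskal--Katona minimizer is unique. The bookkeeping differs: the paper first compresses $S$, bounds its support by $n_0\le m+k+1$, and writes $|B(S)|=|B_0(S)|+(n-n_0)|\Delta S|$, whereas you get the same structure from the incidence identity $|\nabla(T)|=(n-m+1)|T|-\sum_j(j-1)a_j(T)$ with $T=\Delta(S)$ and the bound $\sum_j(j-1)a_j\le\binom{mk}{2}$. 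Your version has the minor advantage of not needing compression at all for this part (compression enters the paper only to bound the support), at the cost of a cruder threshold $n(k,m)$; both are complete proofs of (i).

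Part (ii) is where there is a genuine gap. The paper's route is a stepping argument on the isoperimetric function itself: from the relation between $\mu_{n,m}(j)$ and $\mu_{n,m}(j\pm1)$ it deduces $\mu_{n,m}(k)\le\mu_{n,m}(k-\ell)+\ell$ with $\ell=k_r$, and since $k-k_r$ is non-critical, Part (i) converts $\mu_{n,m}(k-k_r)$ into $f(k-k_r,n,m)$. You instead propose to exhibit a witness $S=I'\cup E$ with $E\subset\partial I'$, which requires $|\partial E\setminus B(I')|\le 2k_r$; but each member of $E$ has $m(n-m)$ neighbours, so this demands that all but a bounded number of them already lie in $B(I')$, and you neither construct such an $E$ nor verify this count. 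As written, the ``plan'' stops exactly at the step that carries the whole content of (ii), so the upper bound $\mu_{n,m}(k)\le f(k-k_r,n,m)+k_r$ is not established by your argument. If you want to complete your route, you must either produce the explicit $E$ and do the adjacency count, or fall back on a monotonicity lemma of the type the paper uses relating $\mu_{n,m}$ at consecutive cardinalities.
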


The proof of Theorem \ref{thm:asymptotic} provides the  estimation 
$$
n(k,m)\le m+k+1-\mu_{m+k+1,m}(k)+f(k,m+k+1,m)
$$
for the value of $n(k,m)$ above for which the statement of Theorem \ref{thm:asymptotic} holds. This upper bound for $n(k,m)$ is not tight but we  make no attempt to optimize its value in this paper.  

Example \ref{example1} shows that the initial segment in the colex order of length ${m+2\choose m}$ can fail to be an optimal set in $J(n,m)$ if  $n=3(m+1)/2$. In the last section  we describe another infinite family of examples for which the initial segment in the colex order  fails again to be  an optimal set in $J(n,m)$ for every fixed $m$  and all $n$ large enough. 

\begin{proposition}\label{prop:nocolex} Let $m$ be a positive integer. For each  integer $k$ of the form
	$$
	k={t\choose m}+3{t\choose m-1}
	$$
	with $t$ sufficiently large with respect to $m$ 
	there is a set $S$ with cardinality $k$  such that 
	$$
	|B(S)|<f(k,n,m)
	$$
	for all $n\geq t+3$.
\end{proposition}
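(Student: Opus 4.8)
The plan is to exhibit, inside $\binom{[t+3]}{m}$, the competitor
\[
S=\Bigl\{\,X\in\binom{[t+3]}{m}\ :\ \bigl|X\cap\{t+1,t+2,t+3\}\bigr|\le 1\,\Bigr\},
\]
and to show that its ball in $J(n,m)$ is strictly smaller than that of the colex initial segment $I$ of length $k$ (equivalently, $|\partial S|<|\partial I|=f(k,n,m)$), so that $I$ is not optimal. Splitting $X$ by the size of $X\cap\{t+1,t+2,t+3\}$ gives $|S|=\binom{t}{m}+3\binom{t}{m-1}=k$. First I would compute the lower shadow: since $t$ is large, every $(m-1)$--set $Y\subseteq[t+3]$ with $\bigl|Y\cap\{t+1,t+2,t+3\}\bigr|\le1$ has a point of $[t]$ outside it and hence extends to a member of $S$, so
\[
\Delta(S)=\Bigl\{\,Y\in\binom{[t+3]}{m-1}\ :\ \bigl|Y\cap\{t+1,t+2,t+3\}\bigr|\le 1\,\Bigr\},\qquad |\Delta(S)|=\binom{t}{m-1}+3\binom{t}{m-2}.
\]
Using $B(S)=\nabla(\Delta(S))$ and sorting the $m$--sets of $\nabla(\Delta(S))$ in $J(n,m)$ into those contained in $[t+3]$ and those with exactly one point in $[t+4,n]$, one obtains
\[
|B(S)|=\Bigl(\binom{t+3}{m}-\binom{t}{m-3}\Bigr)+(n-t-3)\Bigl(\binom{t}{m-1}+3\binom{t}{m-2}\Bigr),
\]
where $\binom{t}{m-3}$ counts the $m$--subsets of $[t+3]$ containing $\{t+1,t+2,t+3\}$, which are precisely the $m$--subsets of $[t+3]$ \emph{not} covered by $\nabla(\Delta(S))$ — this is where $m\ge3$ enters.

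Next I would compare with $I$. For $t$ large one has $\binom{t+2}{m}\le k<\binom{t+3}{m}$, so $\binom{[t+2]}{m}\subseteq I\subseteq\binom{[t+3]}{m}$; in particular every $m$--subset of $[t+3]$ lies within distance $1$ of $I$, i.e.\ $B_{J(t+3,m)}(I)=\binom{[t+3]}{m}$. The same sorting of $\nabla(\Delta(I))$ gives $|B(I)|=\binom{t+3}{m}+(n-t-3)\,|\Delta(I)|$, hence $f(k,n,m)=|B(I)|-k$ and
\[
|B(S)|-|B(I)|=-\binom{t}{m-3}+(n-t-3)\bigl(|\Delta(S)|-|\Delta(I)|\bigr).
\]

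The heart of the matter — and the step I expect to be the main obstacle — is the identity $|\Delta(I)|=|\Delta(S)|=\binom{t}{m-1}+3\binom{t}{m-2}$, i.e.\ that $S$ is an extremal set for the Shadow Minimization Problem for cardinality $k$. Once this is granted, the difference above equals $-\binom{t}{m-3}<0$ for \emph{every} $n\ge t+3$, so $|\partial S|=f(k,n,m)-\binom{t}{m-3}<f(k,n,m)$, which is the content of the proposition. To establish the identity I would isolate the largest ground--set element: writing $I$ as the disjoint union of $\binom{[t+2]}{m}$ with the $m$--sets $\{t+3\}\cup Z$, $Z$ running over the colex segment $J\subseteq\binom{[t+2]}{m-1}$ of size $k-\binom{t+2}{m}=\binom{t}{m-1}-\binom{t}{m-2}$, a short calculation gives $|\Delta(I)|=\binom{t+2}{m-1}+|\Delta(J)|$; using $\binom{t+2}{m-1}=\binom{t}{m-1}+2\binom{t}{m-2}+\binom{t}{m-3}$ the required identity becomes $|\Delta(J)|=\binom{t}{m-2}-\binom{t}{m-3}$, an instance of the assertion ``a colex segment of size $\binom{t}{\ell}-\binom{t}{\ell-1}$ has shadow of size $\binom{t}{\ell-1}-\binom{t}{\ell-2}$'', which one settles by iterating the same peeling and an elementary computation with binomial coefficients (all of this for $t$ large with respect to $m$). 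Finally, $\binom{t}{m-3}>0$ precisely when $m\ge3$, so for $m=2$ one merely gets $|\partial S|=f(k,n,m)$ and $S$ ties with $I$, in agreement with Theorem~\ref{thm:m=2}.
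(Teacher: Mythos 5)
Your construction is the same as the paper's: your $S$ is exactly $B\bigl(\binom{[t]}{m}\bigr)$ in $J(t+3,m)$, and your decomposition of balls into the part inside $[t+3]$ plus $(n-t-3)\,|\Delta(\cdot)|$ copies of the shadow reproduces the paper's identity \eqref{eq:compare}. Your computations of $|S|$, $\Delta(S)$, $|B(S)|$, $|B(I)|$ and the final difference $-\binom{t}{m-3}$ are all correct, and you correctly identify that everything hinges on $|\Delta(I)|=|\Delta(S)|$, i.e.\ that $S$ is a second minimizer for the SMP at the critical cardinality $k$. Up to that point the two arguments coincide.

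The gap is in your treatment of that key identity. You reduce it to the assertion that a colex segment of $\ell$--sets of size $\binom{t}{\ell}-\binom{t}{\ell-1}$ has shadow of size $\binom{t}{\ell-1}-\binom{t}{\ell-2}$, to be ``settled by iterating the same peeling and an elementary computation.'' But the peeling does not reproduce the same form: one peel of such a segment $J$ leaves a colex segment of $(\ell-1)$--sets of size $\binom{t-2}{\ell-1}-\binom{t-1}{\ell-2}=\binom{t-2}{\ell-1}-\binom{t-2}{\ell-2}-\binom{t-2}{\ell-3}$, which is a three--term expression, and further peels accumulate more correction terms; the induction as you state it does not close. Moreover the assertion is genuinely delicate: for $\ell=4$, $t=10$ the segment has size $\binom{10}{4}-\binom{10}{3}=90=\binom{8}{4}+\binom{6}{3}$, whose colex shadow by Kruskal--Katona is $\binom{8}{3}+\binom{6}{2}=71$, whereas $\binom{10}{3}-\binom{10}{2}=75$; so the identity fails unless $t$ exceeds a threshold that grows extremely fast in $m$. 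This is precisely what the paper's Lemma~\ref{lem:gtm} supplies: the $m$--binomial expansion of $k$ is $\sum_i\binom{t-\lambda_i}{m-i}$ for the sequence $\lambda_i=-2,2,4,7,14,51,928,\dots$, valid only for $t>\lambda_{m-1}$, and proving that the $\lambda_i$ are increasing (so that this really is the cascade representation, from which $|\Delta(I)|=\binom{t}{m-1}+3\binom{t}{m-2}$ follows) is the technical heart of the section. Your proof needs that lemma, or an equivalent, and the ``elementary computation'' you invoke is not one.
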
 

When $n=t+3$ the set $S$ in Proposition \ref{prop:nocolex} can be easily described as the ball in $J(n,m)$ of ${[t]\choose m}$, the family of $m$--subsets of the first $t$ symbols. Such sets are clear candidates to be optimal sets. The examples described in Proposition \ref{prop:nocolex} are closely related to the non--unicity of solutions to the Shadow Minimization Problem in the Boolean lattice (see Theorem \ref{thm:fg} below.)

Standard compression techniques are used to prove the above results. These tools fall short to solve the isoperimetric problem of Johnson graphs in full mainly because, as pointed out in \cite{keevash}, for instance, optimal sets in Johnson graphs do not have the nested property (the ball of an optimal set is not optimal.) However these techniques are still useful to show that the colex order provides a sequence of extremal sets for small cardinalities.

The paper is organized as follows. Section \ref{sec:comp} recalls the shifting techniques and compression of sets. The proofs of theorems \ref{thm:m=2}, \ref{thm:m+1} and \ref{thm:asymptotic} are given in sections \ref{sec:m2}, \ref{sec:m+2} and \ref{sec:nlarge} respectively. In the proof of Theorem \ref{thm:asymptotic} we use a result by F\"uredi and Griggs \cite{furedigriggs} which characterizes the cardinalities for which the Shadow Minimization Problem for the Boolean lattice has unique solution. The statement below is a rewriting of a combination of Proposition 2.3 and Theorem 2.6 in \cite{furedigriggs}.

\begin{theorem}[\cite{furedigriggs}]\label{thm:fg} Let
	$$
	k={k_0\choose m}+{k_1\choose m-1}+\cdots +{k_r\choose m-r},\;  k_0> \cdots > k_r\ge m-r>0,
	$$
	be the $m$--binomial representation of $k$. 
	
	The initial segment in the colex order is the unique (up to automorphisms) solution to the Shadow Minimization Problem in the Boolean lattice  if and only if $r<m-1$.
\end{theorem}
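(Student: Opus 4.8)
The plan is to derive the uniqueness characterisation from the Kruskal--Katona theorem together with the compression (shifting) machinery recalled in Section~\ref{sec:comp}, via a recursive analysis in the spirit of F\"uredi and Griggs. Write $\Delta\mathcal F$ for the lower shadow of an $m$--uniform family $\mathcal F$, let $\kappa_m(\cdot)$ denote the Kruskal--Katona value, and recall that if $k=\binom{k_0}{m}+\cdots+\binom{k_r}{m-r}$ is the $m$--binomial cascade then $\kappa_m(k)=\binom{k_0}{m-1}+\cdots+\binom{k_r}{m-r-1}$, attained by the colex initial segment. Fix an optimal $\mathcal F\subseteq\binom{[n]}{m}$ with $|\mathcal F|=k$. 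Since the compressions $C_{ij}$ do not increase $|\Delta\mathcal F|$, first make $\mathcal F$ compressed, keeping it optimal, and let $n$ be the largest symbol it still uses. Split $\mathcal F=\mathcal A\cup(\{n\}\star\mathcal B)$, where $\mathcal A=\{F\in\mathcal F:n\notin F\}\subseteq\binom{[n-1]}{m}$ and $\mathcal B=\{F\setminus\{n\}:\,n\in F\in\mathcal F\}\subseteq\binom{[n-1]}{m-1}$. A short computation gives $\Delta\mathcal F=(\Delta\mathcal A\cup\mathcal B)\,\sqcup\,(\{n\}\star\Delta\mathcal B)$, the union being disjoint according to whether $n$ lies in the set, and being compressed forces $\mathcal B\subseteq\Delta\mathcal A$, so that
\[
|\Delta\mathcal F|=|\Delta\mathcal A|+|\Delta\mathcal B|.
\]
Optimality of $\mathcal F$ collapses the chain $\kappa_m(k)=|\Delta\mathcal F|=|\Delta\mathcal A|+|\Delta\mathcal B|\ge\kappa_m(a)+\kappa_{m-1}(k-a)\ge\kappa_m(k)$ (where $a=|\mathcal A|$), so $\mathcal A$ and $\mathcal B$ are optimal of sizes $a$ and $k-a$, the value $a$ minimises $\kappa_m(a)+\kappa_{m-1}(k-a)$ among those $a$ admitting a family $\mathcal A$ with $k-a\le\kappa_m(a)$, and $\mathcal B$ is coupled to $\mathcal A$ by $\mathcal B\subseteq\Delta\mathcal A$. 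This is the recursion to exploit.

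For the implication ``$r<m-1\Rightarrow$ uniqueness'' I would induct on $m$ (and, within a fixed $m$, on $k$), the base case $r=0$ being the statement that $\binom{[x]}{m}$ is the only optimal family of size $\binom{x}{m}$ (itself provable by the same recursion or by a direct shifting argument). The arithmetic core is to show that, when $r<m-1$, the value $a=\binom{k_0}{m}$ is the \emph{unique} minimiser in the recursion, so that $\mathcal A$ is optimal of size $\binom{k_0}{m}$ and $\mathcal B$ optimal of size $\binom{k_1}{m-1}+\cdots+\binom{k_r}{m-r}$; the cascade of $|\mathcal A|$ has no tail term, while the cascade of $|\mathcal B|$ is that of $k$ with its first term deleted, hence again has fewer than $m-2$ tail terms. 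The inductive hypothesis then forces $\mathcal A$ to be a relabelled $\binom{[k_0]}{m}$ and $\mathcal B$ a relabelled colex segment, and inspecting the equality case of $\mathcal B\subseteq\Delta\mathcal A$ forces the two relabellings to agree, producing the colex segment of length $k$. It then remains to undo the initial compressions: one checks that a compression that strictly alters an optimal family with $r<m-1$ acts on it as an automorphism of $J(n,m)$ (the reversibility analysis of \cite{furedigriggs}), so the original $\mathcal F$ was already a relabelled colex segment.

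For the converse one exhibits, for $k$ with $r=m-1$ and $k$ large enough, two non-isomorphic optimal families. Here the recursion has slack: either $\kappa_m(a)+\kappa_{m-1}(k-a)$ has a second minimiser $a'$, or $\mathcal B$ can be placed inside $\Delta\mathcal A$ in essentially different ways. The smallest instance of the phenomenon is already visible for $m=2$: $k=\binom{4}{2}+\binom{1}{1}=7$ is realised both by $K_4$ with a pendant edge and by $K_5$ with a triangle deleted, both having a five--element vertex--shadow and not related by a permutation of $[n]$. I would write down the corresponding substitution in general --- deleting a small clique from the completed top block $\binom{[k_0]}{m}$ and re-attaching the freed $m$--sets through a common $(m-1)$--set --- and check via the recursion above that the shadow size is unchanged while the isomorphism type is not.

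The step I expect to be the main obstacle is the equality analysis underlying the first implication: determining exactly which $a$ minimise $\kappa_m(a)+\kappa_{m-1}(k-a)$ subject to the constraint, and hence exactly when the hypothesis $r<m-1$ propagates to both $\mathcal A$ and $\mathcal B$ without degeneration. This is precisely where ``flat'' cascade terms $\binom{k_i}{m-i}$ with $k_i=m-i$, and the degenerate case $k_0=m$, have to be treated apart; it is essentially the content of Proposition~2.3 and Theorem~2.6 of \cite{furedigriggs}, and I would follow their cascade calculus for it. The accompanying rigidity statement --- that an optimal family is isomorphic to its compression whenever $r<m-1$ --- is standard but fiddly, and would be established by the monovariant/reversibility argument indicated above.
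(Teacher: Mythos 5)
This theorem is not proved in the paper at all: it is imported verbatim as a known result, stated explicitly as ``a rewriting of a combination of Proposition 2.3 and Theorem 2.6 in \cite{furedigriggs}''. So there is no internal proof to compare yours against; the authors' ``proof'' is the citation. Your write-up is a reasonable reconstruction of the strategy behind the Füredi--Griggs result (compress; split off the link of the largest symbol to get $|\Delta\mathcal F|=|\Delta\mathcal A|+|\Delta\mathcal B|$; run the Kruskal--Katona recursion and analyse its equality cases), and the individual identities you state --- the disjoint decomposition of $\Delta\mathcal F$, the inclusion $\mathcal B\subseteq\Delta\mathcal A$ for compressed families, the $m=2$, $k=7$ counterexample to uniqueness --- are all correct.

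But as a proof it has a genuine gap, and you name it yourself: the entire content of the theorem lives in the steps you defer. Concretely, (1) the claim that $a=\binom{k_0}{m}$ is the \emph{unique} admissible minimiser of $\kappa_m(a)+\kappa_{m-1}(k-a)$ when $r<m-1$, and the propagation of the hypothesis $r<m-1$ to both $\mathcal A$ and $\mathcal B$ through the degenerate cases $k_i=m-i$ and $k_0=m$, is exactly Proposition 2.3/Theorem 2.6 of \cite{furedigriggs} and is not established here; (2) the ``decompression'' rigidity --- that an optimal family is isomorphic to its compression --- is asserted as ``standard but fiddly'' and not carried out, yet without it you only get uniqueness among compressed families; (3) the converse direction needs a second optimal family for \emph{every} $k$ with $r=m-1$, whereas you construct one explicit instance and restrict the general construction to ``$k$ large enough'', which does not match the stated iff. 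Since the paper itself treats this as a black-box citation, the appropriate resolution is either to cite \cite{furedigriggs} as the paper does, or to actually execute the cascade calculus and the reversibility argument rather than pointing at them.
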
  

Finally in Section \ref{sec:nonoptimal} we prove Proposition \ref{prop:nocolex}. The result  describes an infinite family of examples which show that the initial segments in colex order may fail to be optimal sets. The nature of this example shows that the isoperimetric problem in Johnson graphs still has many intriguing open questions to be solved.

\section{Shifting techniques}\label{sec:comp}

Shifting techniques are one of the key tools in the study of set systems. They were initially introduced in the original proof of the Erd\H os--Ko--Rado theorem \cite{ekr} and have been particularly used in the solution by Frankl and F\"uredi \cite{franklfuredi81}  of the isoperimetric problem for hypercubes.

In what follows we identify subsets of $[n]$ with their characteristic vectors   $\x=(x_1,\ldots ,x_n)\in \{0,1\}^n$ where  $x_i=1$ if $i$ is in the corresponding set and $x_i=0$ otherwise. We denote the {\it support} of $\x$ by
$$
\s{\x}=\{ i: x_i=1\}.
$$
and the $\ell_1$-{\it norm} of $\x$ by
$$
|\x|=\sum_i x_i.
$$
The support $\s{S}$ of   $S\subset \{0,1\}^n$ is the union of the supports of its vectors. We often identify  a set $S\subset \{0,1\}^n$ with the subset in $\{0,1\}^{n'}$, $n'>n$, obtained  by adding zeros to the right in the coordinates of its vectors. Thus, the initial segment  of length $k$ is considered to be a subset of $\{0,1\}^n$ for each sufficiently large $n$. 

The sum
$
\x+ \y=(x_1+y_1 \pmod 2,\ldots ,x_n+y_n\pmod 2),
$
of characteristic vectors is meant to be performed in the field $\F_2^n$ and it corresponds to the symmetric difference of the corresponding sets.  We also denote by $\e_1,\ldots ,\e_n$ the unit vectors with $1$ in the $i$--th coordinate and zero everywhere else.

With the above notation, the set of vertices of the Johnson graph $J(n,m)$ are all vectors of $\{0,1\}^n$ with norm $m$, and the neighbors of $\x$ in $J(n,m)$ are the vectors
$$
\x+\e_i+ \e_j,
$$
for each pair $i, j$ such that $x_i+x_j=1$.

We next recall the definition of the shifting transformation.

\begin{definition} Let $i,j\in [n]$. For a set $S\subset \{0,1\}^n$ define
	$$
	S_{ij}=\{ \x\in S: x_i=1 \;\mbox{ and } \; x_j=0\},
	$$
	and
	$$
	T_{ij}(\x,S)=\left\{\begin{array}{ll} \x+\e_i+\e_j, &\mbox{if}\;  \x\in S_{ij} \; \mbox{and }\; \x+\e_i+\e_j\not\in S\\
	\x&\mbox{otherwise}\end{array}\right.
	$$
	The $ij$--shift of $S$ is defined as
	$$
	T_{ij}(S)=\{ T_{ij}(\x,S):\x\in S\}.
	$$
\end{definition}

It follows from the definition that the shifting  $T_{ij}$ of a set preserves its cardinality and the norm of its elements. Moreover,  it sends every vertex to a vertex at distance at most $1$.  The main property of the shifting transformation is that it does not increase the cardinality of the ball of a set. This property follows from the analogous ones for upper and lower shadows. We include a direct proof here for completeness.

\begin{lemma}\label{lem:contrac}
	Let $i,j\in [n]$  and write $T=T_{ij}$. For each set $S$ of vertices in the Johnson graph $J(n,m)$ we have
	\begin{equation}\label{eq:contrac2}	
	B(T(S))\subseteq T(B(S)).
	\end{equation}
	In particular,
	\begin{equation}\label{eq:contrac3}	
	|B(T(S))|\leq |T(B(S))|=|B(S)|.
	\end{equation}
\end{lemma}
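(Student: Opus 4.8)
The plan is to prove the inclusion \eqref{eq:contrac2} directly, since the cardinality statement \eqref{eq:contrac3} follows immediately: $T=T_{ij}$ preserves cardinality, so $|T(B(S))|=|B(S)|$, and $B(T(S))\subseteq T(B(S))$ then gives $|B(T(S))|\le|B(S)|$. So the whole task reduces to taking an arbitrary $\z\in B(T(S))$ and exhibiting it as $T(\w,B(S))$ for some $\w\in B(S)$. The natural candidate is to ``undo'' the shift: if $\z$ already lies in $T(B(S))$ we are done, so the interesting case is $z_i=0$, $z_j=1$ and $\w:=\z+\e_i+\e_j$ (which has $w_i=1$, $w_j=0$); I must show $\w\in B(S)$ and that $T$ actually maps $\w$ to $\z$, i.e. that $\z=\w+\e_i+\e_j\notin B(S)$ — or, if $\z$ happens to already be in $B(S)$, that then $\z\in T(B(S))$ anyway because $T$ fixes it. So the real content is: \emph{if $\z\in B(T(S))$ has $z_i=0,z_j=1$, then $\w=\z+\e_i+\e_j\in B(S)$.}

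To prove this, I would unwind the definition of the ball. Since $\z\in B(T(S))$, either $\z\in T(S)$ or $\z$ has a neighbour in $T(S)$; in the first case $\w\in B(S)$ follows from $T(S)\subseteq T(B(S))\subseteq B(S)\cup(\text{shifts of elements of }B(S))$ and a short case check, so assume $\z$ has a neighbour $\z'=\z+\e_p+\e_q\in T(S)$. Now $\z'\in T(S)$ means $\z'=T(\x,S)$ for some $\x\in S$, so either $\x=\z'\in S$, or $\x=\z'+\e_i+\e_j\in S_{ij}$ with $\z'\notin S$. In each sub-case I want to produce an element of $S$ adjacent to (or equal to) $\w$, which would put $\w\in B(S)$. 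The bookkeeping splits according to how the index pair $\{p,q\}$ meets $\{i,j\}$: if $\{p,q\}\cap\{i,j\}=\emptyset$ then the shift and the edge ``commute'' and the argument is transparent; if $\{p,q\}$ overlaps $\{i,j\}$ there are a handful of coordinate patterns to examine, using each time that $z_i=0$, $z_j=1$, that norms are preserved, and that $\x+\e_i+\e_j$ lies in $S$ exactly when the precondition $\x\in S_{ij}$, $\x+\e_i+\e_j\notin S$ holds.

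The main obstacle is precisely this case analysis on the relative position of $\{i,j\}$ and the ``edge'' $\{p,q\}$ connecting $\z$ to its neighbour in $T(S)$, together with tracking whether various intermediate vectors do or do not belong to $S$ (so that the right branch of the definition of $T_{ij}$ fires). Nothing is deep, but it is easy to miss a configuration. I would organize it by first disposing of the disjoint case, then noting that by the symmetry between the roles of the two coordinates only a few overlapping patterns remain, and in each checking that the element of $S$ witnessing $\z'\in T(S)$, possibly after applying $\e_i+\e_j$, witnesses $\w\in B(S)$. An alternative, cleaner route — which I would mention but not necessarily carry out — is to invoke the known fact \eqref{deltas} that $B(S)=\nabla(\Delta(S))$ together with the standard compression lemmas for the lower and upper shadows in the Boolean lattice (shifting does not increase $|\Delta(\cdot)|$ or $|\nabla(\cdot)|$ and commutes appropriately with them), from which \eqref{eq:contrac2} drops out; the direct proof above is included only for self-containedness.
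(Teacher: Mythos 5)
Your overall strategy is the same as the paper's: prove the inclusion \eqref{eq:contrac2} by a direct case analysis on the values of the $i$-th and $j$-th coordinates, and observe that \eqref{eq:contrac3} follows because $T_{ij}$ preserves cardinality. The only real difference is organizational: the paper fixes $\y=T(\x,S)\in T(S)$ and shows $B(\y)\subseteq T(B(S))$ by matching neighbours of $\y$ with (shifts of) neighbours of $\x$, whereas you start from an arbitrary $\z\in B(T(S))$ and try to exhibit a preimage in $B(S)$. These are equivalent formulations and lead to essentially the same bookkeeping.

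One point in your reduction is too optimistic and should be repaired before you execute the case analysis. You assert that the ``real content'' is the single implication: if $\z\in B(T(S))$ has $(z_i,z_j)=(0,1)$ then $\z+\e_i+\e_j\in B(S)$. That is not all that is needed. If $(z_i,z_j)=(1,0)$, membership of $\z$ in $B(S)$ does \emph{not} by itself place $\z$ in $T(B(S))$: the transformation $T(\cdot,B(S))$ moves $\z$ away unless $\z+\e_i+\e_j\in B(S)$, and no other element of $B(S)$ can map onto $\z$ (its only possible preimage $\z+\e_i+\e_j$ has the pattern $(0,1)$ and is therefore fixed by $T$). So in this case you must prove \emph{both} $\z\in B(S)$ \emph{and} $\z+\e_i+\e_j\in B(S)$; this is exactly what the paper's Case 2.1 (and part of Case 1) establishes, by checking that every neighbour $\z'$ of $\x$ with $(z'_i,z'_j)=(1,0)$ satisfies $\z'+\e_i+\e_j\in B(S)$. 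Likewise, for $(z_i,z_j)\in\{(0,0),(1,1)\}$ you must show $\z\in B(S)$ outright, since such a $\z$ lies in $T(B(S))$ only as the image of itself. None of this is difficult, but it is precisely the sort of configuration your plan warns is ``easy to miss,'' and as stated your dichotomy would leave the $(1,0)$ case unproved. Your alternative route via $B(S)=\nabla(\Delta(S))$ and the standard compression lemmas for lower and upper shadows is sound, and is in fact the justification the paper itself alludes to before giving its self-contained argument.
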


\begin{proof} We will show that,
	\begin{equation}\label{eq:contract4}
	\mbox{ for each } \; \y\in T(S), \; \mbox{  we have }  B(\y)\subseteq T(B(S)),
	\end{equation}
	which is equivalent to  (\ref{eq:contrac2}). We observe that then (\ref{eq:contrac3}) follows since $|B(X)|=|X|+|\partial X|$ for every subset $X$ and 
	$$
	|\partial (T(S))|=|B(T(S))|-|T(S)|\le |T(B(S))|-|T(S)|=|B(S)|-|S|=|\partial S|.
	$$
	
	Let $\x$ be the element in $S$ such that $\y=T(\x,S)$. We consider two cases.
	
	{\it Case 1.} $(x_i,x_j)\neq (1,0)$. In this case we certainly have $\y=\x$. Moreover, if $\z\in \partial\x$ such that $(z_i,z_j)=(1,0)$ then it is readily checked that $\z+\e_i+\e_j\in B(\x)$. Therefore the transformation $T(\cdot,B(S))$ leaves  $B(\x)$ invariant. Hence, $B(\x)\subseteq T(B(S)).$
	
	{\it Case 2.} $(x_i,x_j)= (1,0)$.  Then $\z=\x+\e_i+\e_j$  is the only neighbour of $\x$ with $(z_i,z_j)=(0,1)$.

	{\it Case 2.1} $\y=\x$.  Then, by the definition of $T(\cdot,S)$, we have  $\z\in S$ and $T(\z,S)=\z$. Observe that every neighbour $\z'$ of $\x$ is  left invariant by $T(\cdot,B(S))$.
	This is clearly the case if   $(z'_i,z'_j)\neq (1,0)$ and, if $(z_i,z_j)= (1,0)$, because  we then have $\z''=\z'+ \e_i+\e_j\in B(\z)\subset B(S)$. Hence
	$$B(\y)=B(\x)=\cup_{\z\in B(\x)} T( \z, B(S))\subseteq T(B(S)).$$
	
	{\it Case 2.2} $\y\neq \x$. Then $\y\not\in S$ but $\y\in B(\x)\subseteq B(S)$. Each neighbour $\z$ of $\y$ distinct from $\x$ is of the form $\z=\z'+ \e_i+ \e_j$ for some neighbour $\z'$ of $\x$ and therefore it belongs to $T(B(S))$. For $\x$ itself we have $T(\x,B(S))=\x$ because $\y=\x+\e_i+\e_j\in B(S)$. Thus we again have $B(\y)\subset T(B(S))$. This completes the proof of \eqref{eq:contract4}.
\end{proof}

The weight of a vector $\x \in \{0,1\}^n$ is
$$
w(\x)=\sum_{i=1}^n ix_i,
$$
and the weight of  a set $S$ is
$$
w(S)=\sum_{\x\in S} w(\x).
$$
We note that, if $i>j$ then $w(T_{ij}(S))\le w(S)$. Moreover, equality holds if and only if $T_{ij}(S)=S$. Thus,  successive application of transformations $T_{ij}$ using pairs  $i,j$  with $i>j$ eventually produces a set which is stable by any of such transformations. This fact leads to the following definition.

\begin{definition}\label{def:compresed}  We say that a set $S$ is \textit{compressed} if $T_{ij}(S)=S$ for each pair $i,j\in [n]$ with $i>j$.
\end{definition}

Every  set can be compressed by keeping its cardinality and without increasing its boundary. Therefore, in what follows  we can restrict our attention to compressed sets in our study of optimal sets.

\section{The case $m=2$} 
\label{sec:m2}

Theorem \ref{thm:m=2} follows from the following proposition which characterizes compressed optimal sets in $J(n,2)$.

\begin{proposition} A set $S$ of vertices of the graph $J(n,2)$ with cardinality $\binom{t-1}{2}<|S|\leq \binom{t}{2}$  is optimal if and only if, up to isomorphism,  $S\subseteq\binom{[t]}{2}$.
\end{proposition}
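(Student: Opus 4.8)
The plan is to exploit the very simple structure of balls in $J(n,2)$. First I would observe that, for a set $S$ of vertices of $J(n,2)$ with support $\s{S}=\bigcup_{e\in S}e\subseteq[n]$, one has
$$
B(S)=\Bigl\{\,e\in\tbinom{[n]}{2}:\ e\cap\s{S}\neq\emptyset\,\Bigr\},
$$
so that $|B(S)|=\binom n2-\binom{n-|\s{S}|}{2}$ depends \emph{only} on the cardinality of the support. Indeed, if $e$ meets $\s{S}$ at a vertex $v$ and $v\in f$ for some $f\in S$, then either $e\in S$ or else $e\neq f$ and $|e\cap f|=1$, i.e. $e$ is adjacent to $f$ in $J(n,2)$; the reverse inclusion is immediate. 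Since $|\partial S|=|B(S)|-|S|$, this gives
$$
|\partial S|=\binom n2-\binom{n-|\s{S}|}{2}-|S|,
$$
which, for fixed $|S|=k$, is a strictly increasing function of $|\s{S}|$.

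Next I would minimise the support. A set of $k$ pairs has support of size $s$ only if $\binom s2\ge k$; since $\binom{t-1}{2}<k\le\binom t2$, this forces $|\s{S}|\ge t$, with equality attained by any $k$-element subset of $\binom{[t]}{2}$ (its support cannot lie inside a $(t-1)$-set, as $k>\binom{t-1}{2}$, hence equals $[t]$). This already yields $\mu_{n,2}(k)=\binom n2-\binom{n-t}{2}-k$ and shows that the optimal sets are precisely those $S$ with $|\s{S}|=t$. To connect this with Theorem \ref{thm:m=2} I would check, either via the initial segment of length $k$ in the colex order (whose support is $[t]$ once $k>\binom{t-1}{2}$) together with Proposition \ref{prop:upper}, or by a direct expansion of \eqref{eq:ub1}, that $f(k,n,2)=\binom n2-\binom{n-t}{2}-k$.

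Finally I would translate the condition $|\s{S}|=t$ into the statement of the proposition. If $S$ is optimal then $\s{S}$ is a $t$-subset of $[n]$, and applying the automorphism of $J(n,2)$ induced by any permutation of $[n]$ sending $\s{S}$ to $[t]$ places $S$ inside $\binom{[t]}{2}$. Conversely, any $S\subseteq\binom{[t]}{2}$ with $\binom{t-1}{2}<|S|\le\binom t2$ has support exactly $[t]$, hence $|\s{S}|=t$, and is therefore optimal.

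I do not expect a genuine obstacle here: the whole argument rests on the single observation that the ball of a set in $J(n,2)$ is determined by its support, after which the isoperimetric problem collapses to the elementary task of minimising the number of vertices spanned by $k$ edges. In particular, the compression machinery of Section \ref{sec:comp}, though available, is not needed in this case. The only points calling for a little care are the "up to isomorphism" bookkeeping and the verification that the closed form $\binom n2-\binom{n-t}{2}-k$ coincides with $f(k,n,2)$.
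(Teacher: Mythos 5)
Your proposal is correct and follows essentially the same route as the paper: the paper computes the complement $\tilde S$ of the ball (the pairs disjoint from $\s{S}$) to get $|\partial S|=\binom n2-|S|-\binom{n-|\s{S}|}{2}$, which is exactly your observation that $B(S)$ consists of the pairs meeting the support, and then both arguments minimise $|\s{S}|$ subject to $\binom{|\s{S}|}{2}\ge |S|$. The only difference is presentational (you verify the ball formula and the identification with $f(k,n,2)$ more explicitly), not mathematical.
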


\begin{proof} Write $V(J(n,2))$ as the disjoint union
	$$
	V(J(n,2))=S\cup \partial S\cup \tilde{S},
	$$
	where $\tilde{S}$ is the set of vertices at distance two from  $S$. Let $t=\left|\s{S}\right|$ be the amount of elements of $[n]$ in the support $\s{S}$ of $S$. The only vectors in $\tilde{S}$ are the ones which have both nonzero coordinates in $[n]\setminus \s{S}$. Therefore
	$$
	|\tilde{S}|={n-t\choose 2},
	$$
	and
	$$
	|\partial S|={n\choose 2}-|S|-{n-t\choose 2}.
	$$
	Hence, for a given cardinality $|S|$, $|\partial S|$ is a increasing function of $t$ alone. The optimal value is therefore obtained when $t$ is smallest possible, which is the smallest $t$ such that $|S|\le {t \choose 2}$. This is achieved for any subset $S\subset {[t]\choose 2}$ if $|S|>{t-1\choose 2}$.
\end{proof}

As a consequence of the above proposition, we can see that the solution to the isoperimetric problem in $J(n,2)$  is unique (up to isomorphism) for sets of cardinality ${t\choose 2}$ (and also for sets of cardinality $\binom{t}{2}-1$).

\section{Small sets}
\label{sec:m+2}

In this section we prove Theorem \ref{thm:m+1}.
Consider the partition
$$
V(J(n,m))=\{\mathbf{x}\in V(J(n,m)) |  x_n=0\} \cup \{\mathbf{x}\in V(J(n,m)) |  x_n=1\}=V_0\cup V_1.
$$
The subgraph of $J(n,m)$ induced by $V_0$ is isomorphic to $J(n-1,m)$ and the subgraph induced by $V_1$is isomorphic to $J(n-1,m-1)$. There is an edge in $J(n,m)$ joining $\x\in V_0$ with $\y\in V_1$ if and only if $\s{\y}\setminus\{n\}\subset \s{\x}$.

\begin{lemma}\label{lem:section} Let  $S$ be a   set of vertices  in $J(n,m)$. Let $S_0=S\cap \{ x_n=0\}$ and $S_1=S\cap \{ x_n=1\}$. If $S$ is compressed then
	$$
	B(S)=B(S_0)\; \mbox{ and } |B(S)|= |B'(S'_0)|+ |\Delta (S'_0)|,
	$$
	where $S'_0=\{ \x\in \{0,1\}^{n-1}: (\x, 0)\in S_0\}$ and $B'$ denotes the ball in $J(n-1,m)$.
\end{lemma}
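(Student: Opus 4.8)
The plan is to prove the two assertions separately, the identity $B(S)=B(S_0)$ carrying essentially all the weight; the cardinality formula is then a purely combinatorial decomposition that does not use compression. Since $S_0\subseteq S$ we have $B(S_0)\subseteq B(S)$ for free, so it suffices to show $B(\{\x\})\subseteq B(S_0)$ for each $\x\in S_1$: this yields $B(S_1)\subseteq B(S_0)$ and hence $B(S)=B(S_0)\cup B(S_1)=B(S_0)$. We may assume $n>m$ (the case $n=m$ is degenerate). Fix $\x\in S_1$ and write $\s{\x}=A\cup\{n\}$ with $A\in\binom{[n-1]}{m-1}$.

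The engine is the hypothesis that $S$ is compressed, used only through the shifts $T_{nj}$ with $j\in[n-1]$. Since $\x\in S_{nj}$ for every $j$ with $x_j=0$, and $T_{nj}(S)=S$, the definition of the shift forces $\x+\e_n+\e_j\in S$, that is $A\cup\{j\}\in S_0$, for every $j\in[n-1]\setminus A$; equivalently $\nabla(A)\subseteq S'_0$ in the Boolean lattice on $[n-1]$. Granting this, I will check $B(\{\x\})\subseteq B(S_0)$ by splitting the neighbours of $\x$: a neighbour of $\x$ obtained by deleting $n$ has the form $A\cup\{q\}$ with $q\in[n-1]\setminus A$, hence lies in $S_0$ itself; a neighbour $\mathbf{w}$ retaining $n$ satisfies $\s{\mathbf{w}}\setminus\{n\}=(A\setminus\{p\})\cup\{q\}\subseteq A\cup\{q\}$ for the inserted element $q\in[n-1]\setminus A$, so by the edge criterion between $V_1$ and $V_0$ it is adjacent to $A\cup\{q\}\in S_0$; and $\x$ itself is adjacent to $A\cup\{j\}\in S_0$ for any $j\in[n-1]\setminus A$ (a set that is nonempty because $n>m$). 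This establishes $B(S)=B(S_0)$.

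For the count, partition $B(S_0)$ by the value of the last coordinate. Two vertices of $V_0$ are adjacent in $J(n,m)$ precisely when they are adjacent in the induced copy of $J(n-1,m)$, so $B(S_0)\cap V_0$ is exactly the ball $B'(S'_0)$ of $S'_0$ inside $J(n-1,m)$, contributing $|B'(S'_0)|$. A vertex $\y\in V_1$ lies in $B(S_0)$ if and only if it is adjacent to some $\x\in S_0$, which by the stated edge criterion means $\s{\y}\setminus\{n\}\subseteq\s{\x}$, that is $\s{\y}\setminus\{n\}\in\Delta(S'_0)$; since $\y\mapsto\s{\y}\setminus\{n\}$ is a bijection from $V_1$ onto $\binom{[n-1]}{m-1}$, the part $B(S_0)\cap V_1$ contributes exactly $|\Delta(S'_0)|$. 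Summing the two contributions gives $|B(S)|=|B(S_0)|=|B'(S'_0)|+|\Delta(S'_0)|$.

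I expect the only real obstacle to be the first step, and within it the point that \emph{every} neighbour of a vertex $\x\in S_1$, not merely $\x$ itself, already belongs to $B(S_0)$: this is exactly where compression enters, through the inclusion $\nabla(A)\subseteq S'_0$, and one has to be mildly careful to treat separately the neighbours that drop the coordinate $n$ and those that keep it. Everything after that is routine bookkeeping with lower shadows and the $V_0/V_1$ split.
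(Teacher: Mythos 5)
Your proof is correct and follows essentially the same route as the paper's: compression is used through the shifts $T_{nj}$ to place $A\cup\{j\}$ in $S_0$ for every $j\in[n-1]\setminus A$, each neighbour of $\x\in S_1$ is then matched to one of these sets, and the cardinality formula comes from the same $V_0/V_1$ partition of $B(S_0)$ identifying the $V_1$ part with $\Delta(S'_0)$. Your treatment is slightly more explicit (you separate the neighbours that drop $n$ from those that keep it, and note the degenerate case $n=m$), but there is no substantive difference.
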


\begin{proof}  Let $\x \in S_1$ and $i\not\in  \s{\x}$. Since $S$ is compressed, we have
	$$
	\y=\x+ \e_n + \e_i\in S_0,
	$$
	which implies $\x=\y+ \e_n+ \e_i\in B(S_0)$. Hence, $S_1\subset B(S_0)$. Moreover, if $j\in \s{\x}$ then
	$$
	\x+ \e_i + \e_j=(\x+ \e_n + \e_i) + \e_n + \e_j\in B(S_0),
	$$
	so that $B(\x)\subset B(S_0)$.
	Hence $B(S)=B(S_0)\cup B(S_1)= B(S_0)$. This proves the first part of the sentence.
	
	For the second part, we just note that $B(S_0)$ is the disjoint union $(B(S_0)\cap V_0)\cup (B(S_0)\cap V_1)$. Since the subgraph induced by $V_0$ is isomorphic to $J(n-1,m)$, we have  $|B(S_0)\cap V_0|=|B'(S'_0)|$. On the other hand,  there is an edge in $J(n,m)$ joining $x\in V_1$ with $y\in V_0$ if and only if $\s{x}\setminus\{n\}\subset \s{y}$. It follows that $B(S_0)\cap V_1=\Delta (S'_0)$.
\end{proof}

The next proposition considers the case of Johnson graphs $J(n,m)$ when $n=2m-2$. 

\begin{proposition}\label{prop:2m-2}
	For each $k=1,\ldots ,{2m-2 \choose m}$ the initial segment of length $k$ in   the colex order  is an optimal set of the graph  $J(2m-2,m)$.
\end{proposition}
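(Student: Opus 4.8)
The plan is to reduce to compressed sets, then strip off the top coordinate using Lemma \ref{lem:section} and run an induction in which the lower‑shadow term that appears is controlled by the Kruskal--Katona theorem.

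First I would take a compressed set $S$ of $J(2m-2,m)$ with $|S|=k$ and let $I$ be the colex initial segment of length $k$; by the closing remark of Section \ref{sec:comp} it suffices to show $|B(S)|\ge|B(I)|$. Split $S=S_0\sqcup S_1$ according to the value of the coordinate $n=2m-2$ and let $S_0'\subseteq\binom{[n-1]}{m}$ be the projection of $S_0$. Lemma \ref{lem:section} then gives $|B(S)|=|B'(S_0')|+|\Delta(S_0')|$, where $B'$ is the ball in $J(n-1,m)=J(2m-3,m)$ and $\Delta$ is the lower shadow into $\binom{[n-1]}{m-1}$, and a one‑line check (the shifts $T_{ij}$ with $i,j\le n-1$ commute with taking the section $x_n=0$) shows $S_0'$ is again compressed. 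Doing the same computation for $I$, exactly as in the proof of Proposition \ref{prop:upper}, identifies $I_0'$ with a colex initial segment of $\binom{[n-1]}{m}$ and $\Delta(I_0')$ with a colex initial segment of $\binom{[n-1]}{m-1}$, so the goal becomes $|B'(S_0')|+|\Delta(S_0')|\ge|B'(I_0')|+|\Delta(I_0')|$.

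I would bound the two summands by different means: the Kruskal--Katona theorem gives $|\Delta(S_0')|\ge|\Delta(J)|$ for the colex segment $J$ of the \emph{same} length $|S_0'|$, while the inductive hypothesis in $J(2m-3,m)$ gives $|B'(S_0')|\ge|B'(J)|$ (the base cases being the trivial graphs $J(m,m)$ and $J(m+1,m)=K_{m+1}$, with Theorem \ref{thm:m=2} available at the smallest non‑trivial parameter, reached via the complement isomorphism $J(n-1,m)\cong J(n-1,n-1-m)$). The delicate point — and the step I expect to be the main obstacle — is that $|S_0'|$ is not determined by $k$: it decreases by one each time a vertex is moved into the top layer $S_1$, and enlarging $S_1$ shrinks $S_0'$, which decreases \emph{both} $|B'(S_0')|$ and $|\Delta(S_0')|$. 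What rescues the inequality is the compression constraint $S_0'\supseteq\nabla(S_1')$ (each vertex of $S_1$ forces its whole upper shadow down into the bottom layer), together with $|S_0'|+|S_1'|=k$; one must then show, using Kruskal--Katona for the upper shadow and a shift‑by‑shift exchange, that this forced overhead always beats the savings, with equality realized exactly by the colex configuration. This is where the special value $n-m=m-2$ matters, and it is cleanest to run the whole computation on the complementary presentation $J(2m-2,m)\cong J(2m-2,m-2)$, where the top layer carries the smallest possible parameter $m-2$, the Lemma \ref{lem:section} shadow becomes a lower shadow of $(m-2)$‑sets of $[2m-3]$, and the constraint $S_0'\supseteq\nabla(S_1')$ is the most transparent.

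With that exchange step in hand the two regimes are immediate: for $k$ small enough that the constraint forces $S_1=\emptyset$ one is reduced verbatim to the statement in $J(2m-3,m)$, and for $k$ large the constraint forces $S_0'=\binom{[2m-3]}{m}$, so $B(S)$ is all of $J(2m-2,m)$ and $|\partial S|=\binom{2m-2}{m}-k$, matching $I$. The point I would be most careful about is checking that the exchanges push $S$ monotonically towards $I$ in the colex order (not merely preserve $|B(S)|$), so that the induction really closes on $I$ itself and not on some other set of the same boundary size.
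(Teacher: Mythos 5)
There is a genuine gap here, in fact two. First, your induction does not close. Lemma \ref{lem:section} reduces $J(2m-2,m)$ to $J(2m-3,m)$ with $m$ unchanged, so the relation $n=2m-2$ is destroyed after one step: ``the inductive hypothesis in $J(2m-3,m)$'' is not an instance of the proposition being proved, and via the complement isomorphism $J(2m-3,m)\cong J(2m-3,m-3)$ it lands in the regime $n'>2m'$ where colex optimality is precisely what the rest of the paper shows to be delicate (Example \ref{example1} and Proposition \ref{prop:nocolex} exhibit failures for $n$ large relative to $m$, and Theorem \ref{thm:asymptotic} only covers non-critical $k$ there). You would need the full statement ``colex initial segments are optimal in $J(n',m)$ for all $m\le n'\le 2m-2$'', which is a substantially stronger claim that you neither prove nor can cite. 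Second, the step you yourself flag as ``the main obstacle'' --- the exchange inequality showing that the overhead $S_0'\supseteq\nabla(S_1')$ always beats the savings from shrinking $S_0'$ --- is asserted, not proved, and it is the entire content of the argument. The closing dichotomy is also false: between ``$S_1=\emptyset$'' and ``$S_0'=\binom{[2m-3]}{m}$'' there is a large middle regime (already $k=m-1$ admits a compressed set with $|S_1|=1$ and $S_0'$ far from full), so even granting the exchange step the two ``immediate'' regimes do not exhaust the cases.

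For comparison, the paper's proof avoids sectioning entirely and is where the hypothesis $n=2m-2$ actually enters: write $B(S)=\nabla(\Delta(S))$ as in \eqref{deltas}, apply Kruskal--Katona to get $|\Delta(S)|\ge|\Delta(I_k)|$ with $\Delta(I_k)$ a colex initial segment of $\binom{[2m-2]}{m-1}$, and then observe that $\binom{[2m-2]}{m-1}$ is the self-complementary middle level of the Boolean lattice, so complementation converts upper-shadow minimization into lower-shadow minimization and shows colex initial segments of that level also minimize $\nabla$. Composing the two minimizations gives $|\nabla(\Delta(S))|\ge|\nabla(\Delta(I_k))|$ directly, with no induction and no bookkeeping of $|S_0'|$ versus $|S_1'|$. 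If you want to salvage your approach you would have to formulate and prove the exchange lemma explicitly; the middle-level argument is the reason the paper does not need to.
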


\begin{proof}
	Let us  recall \eqref{deltas} and express the ball of a set $S$ as
	$$B(S)=\nabla(\Delta (S)).$$
	By the Kruskal--Katona theorem, $\Delta S$ is minimized by the initial segment $I_k$, $k=|S|$,   in the colex order of ${[2m-2]\choose m}$. Moreover, $\Delta I_k$ is also an initial segment of the colex order in ${[2m-2\choose m-1}$. 
	
	The automorphism of the Boolean lattice given by taking complements sends  the middle level ${[2m-2\choose m-1}$ to itself, and the initial segments in the colex order are exchanged by the initial segments in the lexicograhic order. It follows that the initial segments of the colex order are also a solution to the minimization of the {\it upper} shadow (as well as the initial segments in the lexicographic order). Hence, $I_k$ minimimizes $|\nabla(\Delta (S))|$. 
\end{proof}

We use the Proposition \ref{prop:2m-2} as the base case of the induction for  the proof of Theorem \ref{thm:m+1}.

\begin{proof}[of Theorem \ref{thm:m+1}.] The proof is by induction on $n$. For $n=2m-2$ the result follows from Proposition \ref{prop:2m-2}. Let $S$ be a compressed set of cardinality $k\le m-1$ in $J(n,m)$, $n\ge 2m-1$ and consider its decomposition $S=S_0\cup S_1$. Since $S$ is compressed, every element in  $S_1$ gives rise to at least $m-1$ elements in $S$. Since  $k<m-1$ we have $S_1=\emptyset$. 
	
	By Lemma \ref{lem:section}, the cardinality of the ball of $S$ is $$|B(S)|= |B'(S'_0)|+ |\Delta (S'_0)|. $$ By the induction hypothesis, the initial segment in the colex order minimizes the ball $|B'(S'_0)|$ in $J(n-1,m)$ as well as, by teh Kruskal--Katona theorem, the lower shadow  $|\Delta (S'_0)|$. 
\end{proof}

\section{Optimal sets for large $n$}\label{sec:nlarge}

In this Section we give the proof of Theorem \ref{thm:asymptotic}. In what follows we call a positive integer $k$ {\it critical} if its $m$--binomial representation has length $m$ (namely, it has $r=m-1$).

\begin{proof}[of Theorem \ref{thm:asymptotic}.]  Let $S$ be an optimal set with cardinality $k$ in $J(n,m)$. We may assume that $S$ is compressed. Let $n_0$ be such that the support of every element in $S$ is contained in $[n_0]$. Since $S$ is compressed, if the support of $\x\in S$ contains $n_0$ then  we have $T_{n_0i}(\x)\in S$ for each $i\in [n_0-1]\setminus \s{\x}$. It follows that $n_0\le m+k+1$.  For each $n\ge n_0$ every element in $\Delta (S)$ gives rise to $n-n_0$ distinct vectors in $\partial S$ which have a coordinate in $[n_0+1,n]$ and therefore are disjoint from the ball $B_0(S)$ in $J(n_0,m)$. Moreover every two such vectors which only differ in their coordinate from $[n_0+1,n]$ come from a unique element in $\Delta(S)$. Therefore, we have
	$$
	|B(S)|=|B_0(S)|+(n-n_0)|\Delta S|,
	$$
	where $B_0$ denotes the ball of $S$ in $J(n_0,m)$ and $B$ denotes the ball of $S$ in $J(n,m)$. Similarly, for $I$ be the initial segment of length $k$ in the colex order. We have
	$$
	|B(I)|=|B_0(I)|+(n-n_0)|\Delta I|
	$$
	Hence,
	\begin{equation}\label{eq:compare}
	|B(S)|=|B(I)|+(|B_0(S)|-|B_0(I)|)+(n-n_0)(|\Delta (S)|-|\Delta (I)|).
	\end{equation}
	If $|\Delta (S)|>|\Delta (I)|$ then we have $|B(S)|>|B(I)|$ for each sufficiently large $n$. By  Theorem \ref{thm:fg}, if the $m$--binomial representation of $k$ has less than $m$ terms then the initial segment in the colex order is the unique solution to the Shadow Minimization Problem. It follows that, if $S\neq I$ then $|B(S)|>|B(I)|$ for all 
	$n>n_0-|B_0(S)|+|B_0(I)|$. This proves the first part of Theorem \ref{thm:asymptotic} and gives the estimate 
	$$
	n(k,m)\le m+k+1-\mu_{m+k+1,m}(k)+f(k,m+k+1,m).
	$$
	
	On the other hand, we have $\mu_{n,m}(k)\le \mu_{n,m}(k+1)-1$, since otherwise an optimal set $X$ with cardinality $k+1$ satisfies $|\partial (X\setminus \{x\})|<\mu_{n,m} (k)$ for every $x\in X$, a contradiction with the definition of $\mu_{n,m}$. Suppose that $k$ is a critical integer and let $k,k-1,\ldots ,k-\ell+1$ be the longest decreasing sequence of critical integers. By the above remark we have  $\mu_{n,m} (k)\le \mu_{n,m}(k-\ell)+\ell$ and $\mu_{n,m} (k-\ell)=f(k-\ell,n,m)$, the cardinality of the boundary of an initial segment in colex order with length $k-\ell$. The value of $\ell$ is clearly 
$k_r$. This proves the second part of the statement. 
\end{proof}


\section{Initial segments which are not optimal}\label{sec:nonoptimal}

We conclude the paper by proving Proposition \ref{prop:nocolex}, which shows that there are values of $k$  for which  the initial segment of length $k$ in the colex order fails to be an optimal set of $J(n,m)$ for all sufficiently large $n$.

We prove first that, for each $m$ and each integer $t$ sufficently large with respect to $m$, 
$$
g(t,m)={t\choose m}+3{t\choose m-1}
$$
is a critical cardinality, namely, the $m$-binomial expansion of $g(t,m)$  has $m$ terms. This means that the solution of the Minimal Shadow Problem  is not unique for $k=g(t,m)$. This fact is  used in the proof of Proposition \ref{prop:nocolex}. 

\begin{lemma}\label{lem:gtm} There is an infinite strictly increasing integer sequence $\{\lambda_i\}_{i\geq 0}$, $\lambda_i+1<\lambda_{i+1}$ such that, for each $t$ and each $m\ge 1$,
	\begin{equation}\label{eq:gtm}
	g(t,m)=\sum_{i=0}^{m-1} {t-\lambda_i\choose m-i}+1.
	\end{equation}
\end{lemma}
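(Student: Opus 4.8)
The plan is to prove the identity \eqref{eq:gtm} by explicitly identifying the binomial coefficients in the $m$-binomial representation of $g(t,m)-1 = \binom{t}{m}+3\binom{t}{m-1}-1$ and extracting from them a sequence $\{\lambda_i\}$ that does not depend on $m$. The guiding heuristic is the Pascal-type recursion $\binom{t}{m-1} = \binom{t-1}{m-1}+\binom{t-1}{m-2} = \binom{t-1}{m-1}+\binom{t-2}{m-2}+\binom{t-2}{m-3}=\cdots$, which lets one ``unfold'' a coefficient $\binom{t}{m-1}$ into a telescoping sum of terms whose lower index strictly decreases, i.e. into a sum of the form $\sum_j \binom{t-\text{(something)}}{m-1-j}$; applying this to the three copies of $\binom{t}{m-1}$ (and borrowing a $-1$ from the leading term, which is why the statement is phrased for $g(t,m)-1$) should produce exactly an initial segment of a fixed integer sequence $\lambda_0<\lambda_1<\cdots$ with the stated gap condition $\lambda_i+1<\lambda_{i+1}$.

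Concretely, first I would pin down the small cases by hand: compute the $m$-binomial representation of $g(t,m)$ for $m=1,2,3$ and $t$ large, read off $\lambda_0,\lambda_1,\lambda_2$, and conjecture the general pattern for $\{\lambda_i\}$ (I expect something like $\lambda_0=0$, and then the remaining $\lambda_i$ being a shifted doubling-type or odd-number-type sequence forced by the three-fold repetition of $\binom{t}{m-1}$ and the gap condition). Second, with the candidate sequence in hand, I would prove \eqref{eq:gtm} by induction on $m$: assume $g(t,m-1) = \sum_{i=0}^{m-2}\binom{t-\lambda_i}{m-1-i}+1$ for all large $t$, and relate $g(t,m)$ to $g$-values at level $m-1$ via Pascal's rule applied to each of $\binom{t}{m}$ and the three $\binom{t}{m-1}$'s, so that the level-$m$ sum reorganizes into a level-$(m-1)$ sum plus the new bottom term. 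Third, I would verify the two side conditions: that the exponents $m-i$ for $i=0,\dots,m-1$ together with the inequalities $t-\lambda_0>t-\lambda_1>\cdots>t-\lambda_{m-1}\ge m-(m-1)=1$ hold for $t$ large (this is what ``sufficiently large with respect to $m$'' buys us, since the $\lambda_i$ are bounded by a function of $m$ only), which certifies that the expression on the right of \eqref{eq:gtm} minus $1$ is genuinely the $m$-binomial representation, and hence that this representation has exactly $m$ terms, i.e. $r=m-1$ and $g(t,m)$ is critical.

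The main obstacle I anticipate is bookkeeping rather than conceptual: getting the sequence $\{\lambda_i\}$ exactly right so that (a) it is $m$-independent, (b) it satisfies the strict gap $\lambda_i+1<\lambda_{i+1}$ (which is exactly the nondegeneracy needed for the $m$-binomial representation to be valid, since consecutive binomial coefficients $\binom{t-\lambda}{j}$ and $\binom{t-\lambda-1}{j-1}$ would otherwise collide via Pascal), and (c) it makes the induction close cleanly at the bottom term. A secondary, minor issue is making ``$t$ sufficiently large with respect to $m$'' quantitatively honest: one needs $t-\lambda_{m-1}\ge 1$ and all the intermediate strict inequalities, so it suffices to take $t$ larger than $\lambda_{m-1}+1$, which is some explicit function of $m$ coming from the closed form of the sequence. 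Once the sequence is correctly guessed, the induction step is a routine repeated application of Pascal's identity and the verification of the inequalities is immediate.
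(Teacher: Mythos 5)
Your overall strategy---compute the representation for small $m$, guess the sequence, then induct on $m$---is the same skeleton the paper uses, but the proposal leaves unaddressed exactly the two points where the real work lies, and your predictions about how they will resolve are wrong. First, the sequence $\{\lambda_i\}$ has no simple closed form of the kind you anticipate. The paper's induction (which unfolds $\binom{t}{m}=\sum_{j=0}^{t-1}\binom{j}{m-1}$ so that $g(t,m)=\sum_{j=0}^{t-1}g(j,m-1)$, rather than repeatedly applying Pascal's rule) forces $\lambda_0=-2$, $\lambda_1=2$, and then the recursion
\[
\lambda_{m-1}=\sum_{j=2}^{m-1}(-1)^{j}\binom{\lambda_{m-j}+j-1}{j}+1,
\]
whose first values are $-2,2,4,7,14,51,928,409625,\dots$ --- a roughly squaring, super-exponentially growing sequence, not a ``shifted doubling-type or odd-number-type'' one, and $\lambda_0\neq 0$. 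Your step of ``conjecturing the general pattern'' therefore cannot be completed as described; the sequence is only accessible through this recursion, which arises from the boundary terms $\sum_{j=0}^{\lambda_i-1}\binom{j-\lambda_i}{m-1-i}$ (evaluated via $\binom{-\ell}{s}=(-1)^{s}\binom{s-1+\ell}{s}$) that your outline treats as mere bookkeeping.

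Second, and more seriously, you assert that ``the verification of the inequalities is immediate.'' It is not: since $\lambda_{m-1}$ is an \emph{alternating} sum of binomial coefficients in the earlier $\lambda_i$, even the positivity of the increments is unclear. The paper must prove the stronger invariant $\lambda_m\ge\max\{\lambda_{m-1}+2,\lambda_{m-1}^{2}/4\}$ by pairing consecutive terms of the alternating sum and bounding each difference $\binom{\lambda_{m-j+1}+j-1}{j}-\binom{\lambda_{m-j}+j}{j+1}$ separately, with special arguments for $j=2$ and for the tail case $m-j=1$; this occupies most of the proof. Without an argument of this kind your proposal never establishes the gap condition $\lambda_i+1<\lambda_{i+1}$, which, as you yourself note, is exactly the nondegeneracy needed for the right-hand side of \eqref{eq:gtm} to be the genuine $m$-binomial representation. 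In short, the proposal is a reasonable outline of the easy half of the lemma but is missing the idea needed for the hard half.
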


\begin{proof} By induction on $m$. For $m=1$ we have 
	\begin{equation}\label{eq.first_ind_step}
	g(t,1)=t+3={t+2\choose 1}+1,
	\end{equation}
	and for $m=2$,
	\begin{equation}\label{eq.second_ind_step}
	g(t,2)={t+2\choose 2}+{t-2\choose 1}+1,
	\end{equation}
	giving $\lambda_0=-2$ and $\lambda_1=2$.
	By using ${n\choose m}=\sum_{j=0}^{n-1}{j\choose m-1}$ and induction, for   $m\ge 3$ we have
	\begin{align*}
	g(t,m)=&{t\choose m}+3{t\choose m-1}\\
	=&\sum_{j=0}^{t-1}{j\choose m-1}+3\sum_{j=0}^{t-1}{j\choose m-2}\\
	=&\sum_{j=0}^{t-1}g(j,m-1)\\
	=&\sum_{j=0}^{t-1} \left(\sum_{i=0}^{m-2}{j-\lambda_i\choose m-1-i}+1\right)\\
	=&\sum_{i=0}^{m-2}\sum_{j=0}^{t-1}{j-\lambda_i\choose m-1-i}+t.
	\end{align*}
	By using $\lambda_0=-2$ and $\lambda_i>0$ for $i\in[1,m-2]$, we can write
	\begin{align*}
	g(t,m)=&\sum_{j=0}^{t-1}{j-\lambda_0\choose m-1}+\sum_{i=1}^{m-2}\left(\sum_{j=\lambda_i}^{t-1}{j-\lambda_i\choose m-1-i}+\sum_{j=0}^{\lambda_i-1}{j-\lambda_i\choose m-1-i}\right)+t\\
	&={t-\lambda_0\choose m}+\sum_{i=1}^{m-2}{t-\lambda_i\choose m-i}+t+\sum_{i=1}^{m-2}\sum_{j=0}^{\lambda_i-1}{j-\lambda_i\choose m-1-i},
	\end{align*}
	which shows that \eqref{eq:gtm} holds with
	\begin{align}
	\lambda_{m-1}=&-\sum_{i=1}^{m-2}\sum_{j=0}^{\lambda_i-1}{j-\lambda_i\choose m-1-i}+1\nonumber \\
	=&-\sum_{i=1}^{m-2}\sum_{\ell=1}^{\lambda_i}{-\ell\choose m-i-1}+1 \nonumber \\
	=&-\sum_{i=1}^{m-2}\sum_{\ell=1}^{\lambda_i}(-1)^{m-i-1}{m-i-2+\ell\choose m-i-1}+1\nonumber \\
	=&\sum_{i=1}^{m-2}(-1)^{m-i}{m-i-1+\lambda_i\choose m-i}+1\nonumber \\
	=&\sum_{j=2}^{m-1}(-1)^{j}{\lambda_{m-j}+j-1\choose j}+1 \label{eq.l_def}
	\end{align}
	We observe that the sequence is uniquely determined once $\lambda_1$ is fixed. 
	The first values of the sequence are
	$$
	-2,2, 4,7,14,51,928, 409625, \ldots
	$$
	It remains to show that the sequence is increasing. We will in fact show that $\lambda_m\ge \max\{ \lambda_{m-1}+2,\lambda_{m-1}^2/4\}$ for all $m\ge 2$.
	The above inequality holds for $m\le 7$ as shown by the first values of the sequence. By using \eqref{eq.l_def} (with $\lambda_m$ instead of $\lambda_{m-1}$), we have, 
	$$
	\lambda_{m}\ge \sum_{j=2, j \text{ even}}^{2\lfloor m/2\rfloor -1}\left({\lambda_{m-j+1}+j-1\choose j}-{\lambda_{m-j}+j\choose j+1}\right).
	$$
	For $j=2$ we have
	\begin{align*}
	{\lambda_{m-1}+1\choose 2}-{\lambda_{m-2}+2\choose 3}&=\frac{\lambda_{m-1}^2}{4}+\frac{\lambda_{m-1}(\lambda_{m-1}+2)}{4}-{\lambda_{m-2}+2\choose 3}\\&\geq \frac{\lambda_{m-1}^2}{4}+\frac{\lambda_{m-2}^4}{64}+\frac{2\lambda_{m-2}^2}{16}-{\lambda_{m-2}+2\choose 3}\\
	&>\frac{\lambda_{m-1}^2}{4}
	\end{align*}
	where the last inequality holds  (for $m\ge 4$) because the largest root of the polynomial $x^4/64+x^2/8-{x+2\choose 3}$ is smaller than $\lambda_4=14$.

	On the other hand, for $j\ge 4$, 
	\begin{align*}
	{\lambda_{m-j+1}+j-1\choose j}&-{\lambda_{m-j}+j\choose j+1}=\frac{1}{j!}\left(\prod_{t=0}^{j-1} (\lambda_{m-j+1}+t)-\frac{\prod_{t=0}^j(\lambda_{m-j}+t)}{j+1}\right)\\
	&\stackrel{ \lambda_{m-j+1}> \lambda_{m-j}+1}{>}\frac{1}{j!}\prod_{t=2}^j(\lambda_{m-j}+t)\left(\lambda_{m-j+1}-\frac{\lambda_{m-j}(\lambda_{m-j}+1)}{j+1}\right)\\
	&\stackrel{ \lambda_{m-j+1}\ge \lambda_{m-j}^2/4}{\ge}\frac{1}{j!}\prod_{t=2}^j(\lambda_{m-j}+t)\left(\frac{\lambda_{m-j}^2}{4}-\frac{\lambda_{m-j}(\lambda_{m-j}+1)}{j+1}\right).
	\end{align*}
	The right-hand side is nonnegative if $m-j\ge 2$, as then  $\lambda_{m-j}\ge 4$. If $m-j=1$ then it follows by induction on $j\ge 1$ that
	$$
	{\lambda_2+j-1\choose j}-{\lambda_1+j\choose j+1}={3+j\choose j}-{2+j\choose j+1}>0.
	$$
	This completes the proof. 
\end{proof}

For $t$ larger than $\lambda_{m-1}$ the equality \eqref{eq:gtm} in Lemma \ref{lem:gtm} provides the $m$--binomial expansion of $g(t,m)$. Hence this binomial  expansion has length $m$ and, by Theorem \ref{thm:fg}, $g(t,m)$ is a critical cardinality.  
The proof of  Proposition \ref{prop:nocolex} uses this fact by choosing two distinct optimal sets for the SMP problem which have different boundaries in the Johnson graph.

\begin{proof} {\it of Proposition \ref{prop:nocolex}}
	Let  $S=B_0({[t]\choose m})$, the ball of ${[t]\choose m}$ in the Johnson graph $J(n_0,m)$, $n_0=t+3$. The cardinality of $S$ is  
	$$
	k={t \choose m}+3{t\choose m-1}=g(t,m).
	$$
	Let $I(k)$ denote the initial segment of length $k$ in the colex order. 
	
	By Lemma \ref{lem:gtm} the $m$-binomial expansion of $k$ has $m$ terms and can be written as
	\begin{equation}
	{t \choose m}+3{t\choose m-1}={t-\lambda_0 \choose m} + {t-\lambda_1 \choose m-1} + \ldots + {t-\lambda_{m-1} \choose 1}.
	\label{expansion}
	\end{equation}
	We note that the shadows of $S$ and of $I(k)$ have the same cardinality:
	\begin{align*}
	|\Delta S|&={t \choose m-1}+3{t\choose m-2}\\&={t-\lambda_0 \choose m-1} + {t-\lambda_1 \choose m-2} + \ldots +{t-\lambda_{m-2}\choose 1}+ {t-\lambda_{m-1} \choose 0}\\&=|\Delta (I(k))|.
	\end{align*}
	
	It can be readily checked that the boundary of $S$ in $J(n_0,m)$ has cardinality
	$$
	|\partial S|=\left|\partial^2 {[t] \choose m}\right|=3{t \choose m-2},
	$$
	On the other hand, the boundary in $J(n_0,m)$ of the initial interval $I(k)$ as given by the function $f(k,n_0,m)$ in \eqref{eq:ub1} is
	\begin{align*}
	|\partial I(k)|=&{t-\lambda_0 \choose m-1}+\sum_{i=1}^{m-1} \left({t-\lambda_i\choose m-i-1}(n-t+\lambda_0-1)-{t-\lambda_i \choose m-i}\right)\\
	=&{t+2 \choose m-1}-\sum_{i=1}^{m-1} {a_i \choose m-i}={t+2 \choose m-1}+{t+2\choose m} -{t\choose m}-3{t\choose m-1}\\
	=&{t+1\choose m-2}+2{t\choose m-2},
	\end{align*}
	which is strictly larger than $3{t\choose m-2}$. Thus $|B_0(I(k))|>|B_0(S)|$. Moreover, by \eqref{eq:compare}, for all $n\ge n_0$, we have
	$$
	|B(I(k))|=|B(S)|+(|B_0(I(k))|-|B_0(S)|)>|B(I(k)|.
	$$
	Therefore the intial segment in colex order $I$ fails to be an optimal set for all $n\ge t+3$.   
\end{proof}

\section*{Acknowledgements}
 The authors are grateful to the comments and remarks of the referees, pointing out some inaccuracies in the original manuscript and helping to improve the readability of the paper.

\bibliographystyle{abbrv}	
\bibliography{to_arxiv.bib}

\end{document}